\newtheorem{defn}{Definition}[section]
\newtheorem{thm}[defn]{Theorem}
\newtheorem{lem}[defn]{Lemma}
\newtheorem{prop}[defn]{Proposition}
\newtheorem{re}[defn]{Remark}
\newcommand{\ad}{{\rm ad}}
\newcommand{\ch}{{\rm ch}}
\newcommand{\F}{\textbf{F}}
\newcommand{\LL}{\mathcal{L}}
\newcommand{\mF}{\mathcal{F}}
\newcommand{\R}{\mathbb{R}}
\newcommand{\C}{\mathbb{C}}
\newcommand{\N}{\mathbb{N}}
\newcommand{\Z}{\mathbb{Z}}
\newcommand{\supercite}[1]{\textsuperscript{\cite{#1}}}
\newcommand{\ba}{\begin{array}}
\newcommand{\ea}{\end{array}}
\newcommand{\bt}{\begin{tabular}}
\newcommand{\et}{\end{tabular}}
\newcommand{\btb}{\begin{table}}
\newcommand{\etb}{\end{table}}
\newcommand{\bc}{\begin{center}}
\newcommand{\ec}{\end{center}}
\newcommand{\bea}{\begin{eqnarray}}
\newcommand{\eea}{\end{eqnarray}}
\newcommand{\Bea}{\begin{eqnarray*}}
\newcommand{\Eea}{\end{eqnarray*}}
\newcommand{\beq}{\begin{equation}}
\newcommand{\eeq}{\end{equation}}
\newcommand{\Beq}{\begin{equation*}}
\newcommand{\Eeq}{\end{equation*}}
\begin{document}
\title{{ \bf Systems of quotients of Lie triple systems}
   \author{{ Yao Ma$^1$, Liangyun Chen$^1$,  Jie Lin$^2$ }
   \\{}$^1$ School of Mathematics and Statistics, Northeast Normal University,
  \\ Changchun,  130024,  CHINA
  \\{}$^2$ Sino-European Institute of Aviation Engineering,\\ Civil Aviation University of China,  Tianjin, 300300,  CHINA}}
 \date{}

\maketitle

\begin{abstract}
In this paper, we introduce the notion of system of quotients of Lie triple systems and investigate some properties which
can be lifted from a Lie triple system to its systems of quotients. We relate the notion of Lie triple system of Martindale-like quotients
with respect to a filter of ideals and the notion of system of quotients, and prove that the system of quotients of a Lie triple system is
equivalent to the algebra of quotients of a Lie algebra in some sense, and these allow us to construct the maximal system of quotients for
 nondegenerate Lie triple systems.

\bigskip

\noindent {\em Key words:}  Lie triple systems, system of quotients, Martindale-like quotients\\
\noindent {\em Mathematics Subject Classification(2010):} 17A40
\end{abstract}
\renewcommand{\thefootnote}{\fnsymbol{footnote}}
\footnote[0]{ Address correspondence to Prof. Liangyun Chen,  School
  of Mathematics and Statistics, Northeast Normal University,
  Changchun 130024, China; E-mail: chenly640@nenu.edu.cn.}

\section{Introduction}

Lie triple systems arose initially in Cartan's study of Riemannian geometry, but whose concept was introduced by Nathan Jacobson in 1949 to study
subspaces of associative algebras closed under triple commutators $[[u,v],w]$(cf. \cite{J1}). The role played by Lie triple systems in  the theory of symmetric
 spaces is parallel to that of Lie algebras in the theory of Lie groups: the tangent space at every point of a symmetric space has the structure of a Lie triple system.

The notion of ring of quotients was introduced by Utumi in 1956 (cf. \cite{U}). He proved that the ring without right zero divisors has a maximal left quotient ring
 and constructed it.  Inspired by \cite{U}, Siles Molina studied the algebras of quotients of Lie algebras (cf. \cite{S}). The notion of Martindale ring of quotients
 was introduced by Martindale in 1969 for prime rings (cf. \cite{M2}). In \cite{GG}, E. Garc\'{i}a and M. G\'{o}mez defined Martindale-like quotients for Lie triple
 systems with respect to power filters of sturdy ideals and constructed the maximal system in the nondegenerate cases.

In this paper we introduce the notion of system of quotients of Lie triple systems and prove that some properties such as semiprimeness, primeness
or nondegeneracy can be lifted from a Lie triple system to its systems of quotients. We answer the question about the relation between $S$ being
a Lie triple system of Martindale-like quotients with respect to a filter and $S$ being a system of quotients of a Lie triple system $T$. We also prove that if $S$ is
 a system of quotients of a semiprime Lie triple system $T$, then
$L(S)=S\oplus\LL(S,S)$ is an algebra of quotients of $L(T)=T\oplus \LL(T,T)$ in Theorem \ref{S/T implies L(S)/L(T)}. Finally, we construct the maximal system of
quotients for a nondegenerate Lie triple system, and show that the maximal system of quotients of a finite dimensional semisimple Lie triple system over an algebraically closed field of characteristic 0 is itself.

Throughout this paper, we let $\F$   be  a field of arbitrary
characteristic. For background material on Lie triple systems the
reader is referred to \cite{GG,GGN,J,J1,L}.  Our notation and terminology
are standard as may be found in \cite{G,S,S1}.

\section{Preliminaries}

\begin{defn}{\rm \supercite{M}}
A vector space $T$ together with a trilinear map $(x, y, z)\mapsto[x,y,z]$ is called a Lie triple system(LTS) if
\begin{enumerate}[(1)]
\item $[x,x,z]=0$,
\item $[x,y,z]+[y,z,x]+[z,x,y]=0$,
\item $[u,v,[x,y,z]]=[[u,v,x],y,z]+[x,[u,v,y],z]+[x,y,[u,v,z]]$,
\end{enumerate}
for all $x,y,z,u,v\in T$.
\end{defn}

\begin{defn}{\rm\supercite{L}}
A subsystem of a LTS $T$ is a subspace $I$ for which $[I,I,I]\subseteq I$. An ideal of a LTS $T$ is a subspace $I$ for which $[I,T,T]\subseteq I$,
in this case we have that $[T,I,T]$ and $[T,T,I]$ are contained in $I$.
\end{defn}

If $L$ is a Lie algebra, then $L$ is a LTS relative to $[a,b,c]\equiv[[a,b],c]$. Conversely, it was showed in \cite{CF} that if $T$ is a LTS, then the standard embedding of $T$ is the $\Z_2$-graded Lie algebra $L(T) =
L_0 \oplus L_1$, $L_0$ being the $\F$-span of $\{\LL(x, y): x, y \in T \}$, denoted by $\LL(T,T)$, where $\LL(x, y)$ denotes the left
multiplication operator in $T$, $\LL(x, y)(z) := [x, y, z]$; $L_1 := T$ and where the product is given by
$$[\left(\LL(x, y), z\right), \left(\LL(u, v),w\right)]:=\left(\LL([u, v, y], x) - \LL([u, v, x], y) + \LL(z,w), [x, y, w] - [u, v, z]\right).$$
Let us observe that $L_0$ with the product induced by the one in $L(T) = L_0 \oplus L_1$ becomes a Lie algebra. Moreover, for a $\Z_2$-graded Lie algebra $L=L_0\oplus L_1$, $L_1$
 has the structure of a LTS and every LTS $T$ is the $1$ component of a $\Z_2$-graded Lie algebra since its standard imbedding $L(T)$ is a
  $\Z_2$-graded Lie algebra with $L(T)_0=\LL(T,T), L(T)_1=T$.

\begin{defn}{\rm\supercite{S}}
A Lie algebra $L$ is said to be semiprime if $[I, I]\neq0$ for every nonzero ideal $I$ of $L$. $L$ is said to be prime if every two nonzero ideals $I, J$
of $L$ give $[I, J]\neq0$. $L$ is said to be nondegenerate if $[[L, x], x]\neq0$ for every nonzero element $x$ of $L$. For a nonempty subset $I$ of $L$ the
 set $C_L(I)=\{x\in L|[x,I]=0\}$ is called the centralizer of $I$ in $L$.
\end{defn}

\begin{defn}{\rm\supercite{CF}}
A LTS $T$ is semiprime if $[T, I, I]\neq0$ for every nonzero ideal $I$ of $T$. It is prime if every two nonzero ideals $I, J$ of $T$ give $[T, I, J]\neq0$.
 $T$ is called nondegenerate if $[T, x, x]\neq 0$ for every nonzero element $x$ of $T$. Let $I$ be a nonempty subset of $T$. $C_{T}(I)=\{x\in T|[x,I,T]=[T,I,x]=0\}$ is
 called the centralizer of $I$ in $T$. In particular, $C_{T}(T)=\{x\in T|[x,T,T]=0\}$ is called the center of $T$, and is denoted by $C(T)$.
\end{defn}

\begin{defn}
An ideal $I$ of a LTS $T$ is said to be essential if every nonzero ideal of $T$ hits $I$ ($I\cap J\neq 0$ for every nonzero ideal $J$ of $T$).
\end{defn}

Clearly, if $I$ and $J$ are essential ideals, then $I\cap J$ is an
essential ideal.

The following results contain analogous results to the corresponding
ones for Lie algebras in \cite{S}, and their proofs are similar.
Note that we will always consider a LTS as the $1$ component
 of its standard imbedding.

\begin{prop}{\rm\supercite{CF}}\label{center and essential}
Let $I$ be an ideal of a LTS $T$. Then
\begin{enumerate}[(1)]
\item $C_{T}(I)$ is an ideal of $T$.
\item\label{center implies essential} If~~$C_{T}(I)=0$, then $I$ is essential. Moreover, if $T$ is semiprime, then $I\cap C_{T}(I)=0$ and $I$
 is essential if and only if $C_T(I)=0$.
\end{enumerate}
\end{prop}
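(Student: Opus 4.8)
The plan is to prove (1) first and then read off (2) from it by two short containment arguments. For (1) I will work inside the standard imbedding, as the paper proposes, reducing to the analogous statement for Lie algebras; a direct verification would also work.

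For (1): attach to the ideal $I$ the $\Z_2$-graded subspace $\widetilde I:=\LL(I,T)\oplus I$ of $L(T)=\LL(T,T)\oplus T$, and check, using the bracket formula on $L(T)$ and the fact that $I$ absorbs triple products ($[I,T,T],[T,I,T],[T,T,I]\subseteq I$), that $\widetilde I$ is an ideal of $L(T)$. Being homogeneous in the $\Z_2$-grading, its centralizer $C_{L(T)}(\widetilde I)$ is homogeneous as well; by the corresponding Lie-algebra result in \cite{S} it is an ideal of $L(T)$, and the degree-one part of a graded ideal of $L(T)$ is an ideal of $T$. It remains to identify $C_{L(T)}(\widetilde I)\cap T$ with $C_T(I)$: for $z\in T$ one has $[z,w]=\LL(z,w)$ for $w\in I$ and $[z,\LL(a,b)]=-[a,b,z]$ for $a\in I$ and $b\in T$, so $z$ centralizes $\widetilde I$ exactly when $[z,I,T]=0$ and $[I,T,z]=0$, i.e.\ (by skew-symmetry in the first two arguments) exactly when $z\in C_T(I)$. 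The direct route is also short: from the definition together with $(1)$ and $(2)$, an element $x\in C_T(I)$ satisfies $[x,I,T]=[x,T,I]=[I,x,T]=[I,T,x]=[T,x,I]=[T,I,x]=0$, and then for $c=[x,a,b]$ one expands $[i,t,c]$ and $[c,i,t]$ via the derivation identity $(3)$, each resulting summand falling into one of these vanishing brackets because $I$ absorbs triple products.

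For (2): if $J$ is a nonzero ideal with $I\cap J=0$, then for every $z\in J$ the subspaces $[z,I,T]$ and $[T,I,z]$ lie in $[T,I,T]\subseteq I$ and also in $J$, hence in $I\cap J=0$, so $J\subseteq C_T(I)$; thus $C_T(I)=0$ forces $J=0$ and $I$ is essential. Assume now that $T$ is semiprime. By (1) the set $K:=I\cap C_T(I)$ is an ideal, and $[T,K,K]\subseteq[T,I,C_T(I)]=0$ because $[T,I,x]=0$ for every $x\in C_T(I)$; semiprimeness then gives $K=0$, that is $I\cap C_T(I)=0$. The equivalence is immediate: the implication $C_T(I)=0\Rightarrow I$ essential has just been proved, and conversely, if $I$ is essential then the ideal $C_T(I)$ meets $I$ only in $0$ and so must be zero.

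I expect the only real obstacle to be the bookkeeping in (1): verifying that $\widetilde I$ is a graded ideal of $L(T)$ and pinning down its centralizer inside $T$ (or, on the direct route, organizing the uses of $(3)$ across the six orderings of a triple product). Nothing in the argument restricts the characteristic of $\F$; only identities $(1)$--$(3)$, skew-symmetry, and the ideal property are used.
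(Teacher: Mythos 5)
Your main (imbedding) argument is correct, and it is exactly the route the paper has in mind: the paper offers no proof of this proposition, citing \cite{CF} and remarking only that the proofs are analogous to the Lie-algebra results of \cite{S} when $T$ is viewed inside its standard imbedding. Your verification that $I\oplus\LL(I,T)$ is a graded ideal of $L(T)$, that centralizers of graded ideals are graded ideals, and the identification $C_{L(T)}(I\oplus\LL(I,T))\cap T=C_T(I)$ (which the paper itself essentially reproves later as Proposition~\ref{C(L(I))=0 iff C(I)=0}(1)) all check out, as does your treatment of part (2), including the correct use of semiprimeness only where it is needed.

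One caution about your ``direct route'' aside: identity (3) expands a triple product sitting in the \emph{third} slot, so it applies to $[i,t,c]$ but not to $[c,i,t]$ with $c=[x,a,b]$ in the first slot. After you move the product (via (2), or via the operator identity $\LL([x,a,b],i)=[\LL(x,a),\LL(b,i)]-\LL(b,[x,a,i])$), one summand survives, e.g.\ $[b,i,[x,a,t]]$ (equivalently $[[x,a,t],b,i]$), and this is \emph{not} one of your six vanishing brackets nor killed merely by $I$ absorbing triple products: it vanishes because it lies in $[T,I,[x,T,T]]$, which is zero only by the half you prove first. So the direct proof does go through, but in two passes --- first establish $[T,I,[x,T,T]]=0$ from (3), then use that to get $[[x,T,T],I,T]=0$ --- rather than by a single ``every summand lands in a vanishing bracket'' sweep. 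Since you present the imbedding argument as the primary proof, this does not affect the overall correctness of your proposal.
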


\begin{prop}\label{converse not true}
If $T$ is a LTS, then
\begin{center}
$T$ is nondegenerate $\Rightarrow$ $T$ is semiprime $\Rightarrow$ $C(T)=0$.
\end{center}
\end{prop}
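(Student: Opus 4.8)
The plan is to establish the two implications separately; each is a short argument from the defining identities of a Lie triple system, so I expect no serious obstacle — the only care needed is in tracking which of the three slots of $[\,\cdot,\cdot,\cdot\,]$ a given element occupies.

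For $T$ nondegenerate $\Rightarrow$ $T$ semiprime I would argue by contraposition. Suppose $T$ is not semiprime, so there is a nonzero ideal $I$ of $T$ with $[T,I,I]=0$. Choosing any nonzero $x\in I$ we get $[T,x,x]\subseteq[T,I,I]=0$, so $x$ is a degenerate element and $T$ is degenerate. That is the whole argument: the only thing to check is the containment $[T,x,x]\subseteq[T,I,I]$, which is immediate since $x\in I$.

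For $T$ semiprime $\Rightarrow$ $C(T)=0$ I would first recall that $C(T)=C_T(T)$ is an ideal of $T$ by Proposition~\ref{center and essential}(1), and observe that for $c\in C(T)$ one has not only $[c,T,T]=0$ (the defining property) but also $[T,c,T]=0$: by identity~(1) the product is skew in its first two arguments, so $[a,c,b]=-[c,a,b]\in[c,T,T]=0$ for all $a,b\in T$. Now assume $C(T)\neq 0$ and apply semiprimeness to the nonzero ideal $I=C(T)$: this would force $[T,C(T),C(T)]\neq 0$. But the middle argument of that product lies in $C(T)$, whence $[T,C(T),C(T)]\subseteq[T,C(T),T]=0$ by the previous observation — a contradiction. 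Hence $C(T)=0$.

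As noted, I do not anticipate a genuine difficulty: the statement is a direct consequence of the definitions and of the fact (already available) that $C(T)$ is an ideal. The only bookkeeping point is moving the central element through the slots of the triple product, which identities~(1) and~(2) handle at once; indeed the same manipulation shows $[T,T,c]=0$ as well, consistent with the identification $C_T(T)=\{x\in T\mid[x,T,T]=0\}$ used in the preliminaries. One could instead prove the second implication by exhibiting, for $0\neq x\in C(T)$, the nonzero ideal $\F x+[T,T,x]$ (or the ideal generated by $x$) as a witness to the failure of semiprimeness, but working with $C(T)$ itself is cleaner.
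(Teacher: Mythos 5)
Your proof is correct and follows essentially the argument the paper intends: the paper omits an explicit proof, remarking only that it parallels the Lie algebra case in \cite{S}, and your two steps (contraposition via a nonzero $x$ in an ideal $I$ with $[T,I,I]=0$, and applying semiprimeness to the ideal $C(T)$ after moving the central element through the slots by skew-symmetry) are exactly that standard analogue. No gaps; the skew-symmetry $[a,c,b]=-[c,a,b]$ from identity (1) indeed gives $[T,C(T),T]=0$, which is all the second implication needs.
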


However, the converses of Proposition \ref{converse not true} are not true. Inspired by the counterexample in \cite{S}, we consider a vector space
$A=\left\{\left( \begin{array}{cc}
a & b \\
0 & 0
\end{array} \right)
| ~a,b\in\R\right\}$. Then $A$ becomes a LTS $A^{-}$ by putting
$[E,F,G]=EFG-FEG-GEF+GFE, \forall E, F, G\in A$. It is easy to prove  $C(A^{-})=0$. Note that
$I^-=\left\{\left( \begin{array}{cc}
0 & b \\
0 & 0
\end{array} \right)
|~ b\in\R\right\}$ is an ideal of $A^{-}$ which satisfies $[A^{-}, I^-, I^-]=0$, hence $A^{-}$ is not semiprime.

In \cite{G}, the author constructed a semiprime degenerate Lie algebra, and we denote it by $L$, then $L$ can be regarded as a LTS $\tilde{L}$ with $[a,b,c]=[[a,b],c], \forall a,b,c\in L$. Then $\tilde{L}$ is a semiprime degenerate LTS.

\begin{re}
A LTS $T$ is prime if $C_{T}(I)=0$ for every nonzero ideal $I$ of $T$.
\end{re}

\section{Systems of quotients of a Lie triple system}

Inspired by the notion of algebra of quotients of Lie algebras in \cite{S}, we introduce the notion of system of quotients of Lie triple systems.

Suppose that $T$ and $S$ are two LTS such that $T\subseteq S$, and $R(-,-):S\rightarrow S$ is defined by $R(x,y)(z)=[z,x,y], \forall x,y,z\in S$. For
every $s\in S$, set
$$_T(s)=\F s+\left\{\sum_{i=1}^n R(x^i_{1},y^i_{1}) \cdots R(x^i_{k_i},y^i_{k_i})(s)
\ \vert\ x^i_j,y^i_j\in T \text{ with } n, k_i\in \mathbb{N} \right\}.$$
That is, $_T(s)$ is the linear span in $S$ of the elements of the
form $R(x_1,y_1)\cdots R(x_n,y_n)(s)$ and $s$, where $n\in\N$ and
$x_1, \cdots, x_n, y_1, \cdots, y_n\in T$. It is clear that $[{_T}(s), T, T]\subseteq {_T}(s)$, $[T, {_T}(s), T]\subseteq {_T}(s)$ and  $[ T, T, {_T}(s)]\subseteq {_T}(s)$. In particular, in the case that $s\in T$, $_T(s)$ is the
ideal of $T$ generated by $s$. Moreover, we define
$$(T:s)=\{x\in T|~[x,T,{ _T(s)}]+[x,{ _T(s)},T]\subseteq T\}.$$
Obviously, if $s\in T$, then $(T:s)=T$. By definition, for $x\in (T:s)$,  $[x,T, {_T}(s)]$ and  $[x,{_T}(s), T]$ are contained in $T$, moreover, by identity (1) in the definition of Lie triple systems,  $[T,x, {_T}(s)]$ and  $[{_T}(s),x, T]$ are contained in $T$. Finally by (2),
$[T,{_T}(s), x] \subseteq [{_T}(s), x,T] +[x, T, {_T}(s)] \subseteq T$.

\begin{prop}\label{(T:s) is an ideal}
Let $T$ be a subsystem of a LTS $S$ and take $s\in S$. Then $(T:s)$
is an ideal of $T$. Moreover, it is maximal among the ideal $I$ of  $T$ such that $[I,T,s]+[I,s,T]\subseteq T$.
\end{prop}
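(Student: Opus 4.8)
The plan is to prove two things: that $(T:s)$ is an ideal of $T$, and that it is the largest ideal $I$ of $T$ with $[I,T,s]+[I,s,T]\subseteq T$. I will use throughout the facts recorded just before the statement, together with identity (1): for $x\in(T:s)$, any triple product having $x$ in one slot, an element of $T$ in another, and an element of ${_T}(s)$ in the third lies in $T$; and ${_T}(s)$ is stable under bracketing with two elements of $T$ placed in any slots. For the ideal property, fix $x\in(T:s)$ and $a,b\in T$. Since $T$ is a subsystem, $[x,a,b]\in T$, so it remains to show $[[x,a,b],t,w]\in T$ and $[[x,a,b],w,t]\in T$ for all $t\in T$, $w\in{_T}(s)$. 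First, expanding $[t,w,[x,a,b]]$ by identity (3) gives $[[t,w,x],a,b]+[x,[t,w,a],b]+[x,a,[t,w,b]]$; the first summand is in $T$ because $[t,w,x]\in T$, and the others because $[t,w,a],[t,w,b]\in{_T}(s)$ while $x\in(T:s)$. Then identities (1) and (2) rewrite $[[x,a,b],t,w]$ in terms of $[t,w,[x,a,b]]$ and $[[x,a,b],w,t]$, so everything reduces to proving $[[x,a,b],w,t]\in T$.

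To prove that, I would write $[x,a,b]=-[a,x,b]$ by identity (1) and expand $[[a,x,b],w,t]$ using that $\LL(a,x)$ is a derivation of the triple product (identity (3)): $[[a,x,b],w,t]=[a,x,[b,w,t]]-[b,[a,x,w],t]-[b,w,[a,x,t]]$. The first term is in $T$ since $[b,w,t]\in{_T}(s)$ and $x\in(T:s)$; the second since $[a,x,w]\in T$, whence $[b,[a,x,w],t]\in[T,T,T]\subseteq T$. The third term $[b,w,[a,x,t]]$ is the delicate one: $[a,x,t]\in T$, but this product is a priori only in ${_T}(s)$. Here I would apply identity (3) once more, obtaining $[b,w,[a,x,t]]=[[b,w,a],x,t]+[a,[b,w,x],t]+[a,x,[b,w,t]]$; each of these three terms is now a triple product containing $x$ in one slot, a $T$-element in another, and a ${_T}(s)$-element (or a $T$-element) in the third, hence lies in $T$. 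Therefore $[[a,x,b],w,t]\in T$, so $[[x,a,b],w,t]\in T$, and with the reduction above $[[x,a,b],t,w]\in T$ as well; thus $[x,a,b]\in(T:s)$ and $(T:s)$ is an ideal.

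For the maximality, $(T:s)$ itself satisfies $[(T:s),T,s]+[(T:s),s,T]\subseteq T$ because $s\in{_T}(s)$. Conversely, given an ideal $I$ of $T$ with $[I,T,s]+[I,s,T]\subseteq T$, I would show $I\subseteq(T:s)$ by induction on the length $n$ of the monomials $R(p_1,q_1)\cdots R(p_n,q_n)(s)$ spanning ${_T}(s)$. For $n=0$ this is the hypothesis, upgraded by identities (1) and (2) to the statement that all six orderings of a product with one factor in $I$, one in $T$, and the factor $s$ lie in $T$. For the step, such a monomial is $w=[w',p_1,q_1]$ with $w'$ of length $n-1$; for $x\in I$ and $t\in T$, expanding $[x,t,[w',p_1,q_1]]$ and $[x,[w',p_1,q_1],t]$ by identity (3) produces only terms containing either a factor $[x,t,w']$ (covered by the inductive hypothesis), or a factor lying in $I$ (since $[I,T,T]\subseteq I$), after which each term is covered by the induction or falls into $[T,T,T]\subseteq T$; the single term $[x,q_1,[w',p_1,t]]$, which involves a length-$n$ monomial, is handled by one further application of identity (3) that dissolves $w'$ back to length $n-1$. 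Hence every such ideal lies in $(T:s)$, and since $(T:s)$ is itself an ideal with the required property, it is the maximum.

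The step I expect to be the main obstacle is the one flagged above: after expanding $[[a,x,b],w,t]$, the term $[b,w,[a,x,t]]$ is not visibly in $T$, and the argument closes only because a second application of identity (3) turns it into triple products each combining an element of $(T:s)$, an element of $T$, and an element of ${_T}(s)$. One must also notice that the sibling pattern $[[x,a,b],t,w]$ does not succumb to the same direct computation — the analogous second expansion merely reproduces the original term — which is exactly why it is cleaner to establish $[t,w,[x,a,b]]\in T$ first and move between the two patterns using identities (1) and (2).
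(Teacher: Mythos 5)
Your proof is correct and follows essentially the same route as the paper: establish $[T,{_T}(s),[x,T,T]]\subseteq T$ via identity (3), use (1) and (2) to reduce the remaining orderings to $[[x,T,T],{_T}(s),T]\subseteq T$, and prove maximality by induction on the length of the monomials $R(x_1,y_1)\cdots R(x_n,y_n)(s)$ spanning ${_T}(s)$. The only (harmless) deviation is in the sub-step for $[[x,a,b],w,t]$: the paper expands with $\mathcal{L}(x,a)$ so that the leftover term $[b,w,[x,a,t]]$ is absorbed by the already-proved inclusion $[T,{_T}(s),[x,T,T]]\subseteq T$, whereas you flip to $[a,x,b]$ and need a second application of identity (3) to dispose of $[b,w,[a,x,t]]$ -- both work.
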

\begin{proof} It is clear that $(T:s)$ is a subspace of $T$. Now, for any
  $x\in (T:s)$, we have \Bea
[T,{_{T}(s)},[x,T,T]]&\subseteq&[[T,{_{T}(s)},x],T,T]+[x,[T,{_{T}(s)},T],T]+[x,T,[T,{_{T}(s)},T]]\\
&\subseteq& [T,T,T]+[x,{_{T}(s)},T]+[x,T,{_{T}(s)}]\subseteq T
\Eea
and
\Bea
[[x,T,T],{_{T}(s)},T]&\subseteq&[x,T,[T,{_{T}(s)},T]]+[T,[x,T,{_{T}(s)}],T]+[T,{_{T}(s)},[x,T,T]]\\
&\subseteq&[x,T,{_{T}(s)}]+[T,T,T]+T\subseteq T.
\Eea
It follows that $[[x,T,T],T,{_{T}(s)}]\subseteq [T,{_{T}(s)},[x,T,T]]+[[x,T,T],{_{T}(s)},T]\subseteq T$, and so $[x,T,T]\subseteq (T:s)$, i.e., $(T:s)$ is an ideal of $T$.

Suppose that $I$ is an ideal of $T$  such that $[I,T,s]+[I,s,T]\subseteq T$. We will show that for all $x_1, \cdots, x_n, y_1, \cdots, y_n\in T, n\in \N$,
$$[I,T,R(x_{1},y_{1})\cdots R(x_{n},y_{n})(s)]\subseteq T, ~~~[I,R(x_{1},y_{1})\cdots R(x_{n},y_{n})(s),T]\subseteq T.$$
We prove it by induction on $n\geq1$. The base step holds since
\Bea
[I,T,[s,x_{1},y_{1}]]&\subseteq&[[I,T,s],x_{1},y_{1}]+[s,[I,T,x_{1}],y_{1}]+[s,x_{1},[I,T,y_{1}]]\\
&\subseteq &[T,T,T]+[s,I,T]+[s,T,I]\subseteq T
\Eea
and
\Bea
[I,[s,x_{1},y_{1}],T]&\subseteq&[s,x_{1},[I,y_{1},T]]+[[s,x_{1},I],y_{1},T]+[I,y_{1},[s,x_{1},T]]\\
&\subseteq & [s,T,I]+[T,T,T]+T\subseteq T.
\Eea

For the inductive step, assume $$[I,T,R(x_{2},y_{2})\cdots R(x_{n},y_{n})(s)]\subseteq T \text{ and } [I,R(x_{2},y_{2})\cdots R(x_{n},y_{n})(s),T]\subseteq T.$$
Apply step one to the element $R(x_{2},y_{2})\cdots R(x_{n},y_{n})(s)$ we have $[I,T,_{T}(s)]+[I,_{T}(s),T]\subseteq T$, and so $I\subseteq (T:s)$.
\end{proof}

\begin{defn}
Let $T$ be a subsystem of a LTS $S$. Then $S$ is called a system of quotients of $T$, if given $s,s'\in S$ with $s\neq0$, there exist $x,y\in T$ such that
\beq\label{[sxy]neq0} [s,x,y]\neq0, \eeq
\beq\label{x or y in(T:s)} x\in (T:s') ~or~ y\in (T:s'). \eeq
\end{defn}

\begin{prop}
Let $T$ be a subsystem of a LTS $S$.
\begin{enumerate}[(1)]
\item If $C(T)=0$, then $T$ is a system of quotients of itself.
\item If $S$ is a system of quotients of $T$, then $C_S(T)=C(T)=0$.
\end{enumerate}
\end{prop}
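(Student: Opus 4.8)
The plan is to reduce each part directly to the definition of a system of quotients, using the elementary observation that the ideal $(T:s')$ equals all of $T$ whenever the ``denominator'' $s'$ may be taken inside $T$ (or to be $0$), so that condition \eqref{x or y in(T:s)} imposes no restriction.

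For part (1), assume $C(T)=0$ and let $s,s'\in T$ with $s\neq0$. Since $s'\in T$ we have $(T:s')=T$, and hence \eqref{x or y in(T:s)} holds for every choice of $x,y\in T$; thus it remains only to produce $x,y\in T$ with $[s,x,y]\neq0$. But $s\neq0$ and $C(T)=\{x\in T:[x,T,T]=0\}$ is trivial, so $s\notin C(T)$, i.e.\ $[s,T,T]\neq0$, which gives the desired $x,y$. Therefore $T$ is a system of quotients of itself.

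For part (2), first note that $T\subseteq S$ forces $C(T)\subseteq C_S(T)$, so it suffices to show $C_S(T)=0$. Suppose towards a contradiction that $0\neq s\in C_S(T)$; in particular $[s,T,T]=0$. Now apply the defining property of a system of quotients to the pair $(s,s')$ with $s'=0$ (here ${_T}(0)=0$, so $(T:0)=T$ and \eqref{x or y in(T:s)} is automatic): there exist $x,y\in T$ with $[s,x,y]\neq0$, contradicting $[s,x,y]\in[s,T,T]=0$. Hence $C_S(T)=0$ and, a fortiori, $C(T)=0$.

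I do not expect a genuine difficulty in this argument; the only points that require attention are the two bookkeeping facts just used — that $(T:s')=T$ for $s'\in T$ (and for $s'=0$), so the ``or'' condition on $x,y$ is vacuous in both parts, and that the center $C(T)$ sits inside the $S$-centralizer $C_S(T)$ — after which each assertion collapses to a one-line contradiction with \eqref{[sxy]neq0}.
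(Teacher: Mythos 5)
Your argument is correct and is essentially the paper's own proof: in (1) you use $(T:s')=T$ for $s'\in T$ and $s\notin C(T)$ to get $[s,x,y]\neq0$, and in (2) you use $C(T)\subseteq C_S(T)$ and apply the defining property of a system of quotients to any nonzero $s\in S$ to contradict $[s,T,T]=0$. The only cosmetic difference is your explicit choice $s'=0$ to trivialize the $(T:s')$ condition, which the paper simply leaves implicit since only \eqref{[sxy]neq0} is needed.
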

\begin{proof}
(1) Given any $s,s'\in T$ with $s\ne 0$, since $s\not\in C(T)$ there exist $x,y\in T(=(T:s'))$ such that $[s,x,y]\ne 0$.

(2) Since $C(T)\subseteq C_S(T)$  we only need to prove $C_S(T)=0$. Given any $s\in S$ there exist $x,y\in T$ such that $[s,x,y]\ne 0$. So $s\not\in C_S(T)$.
\end{proof}

The above proposition says for a LTS $T$, that $C(T)=0$ is a sufficient and necessary condition such that $T$ has a system of quotients.

We will show that some properties of a LTS $T$ can be inherited by its system of quotients $S$. Actually, $S$ just needs a weaker condition.

\begin{defn}
Let $T$ be a subsystem of a LTS $S$. Then $S$ is called a weak system of quotients of $T$, if for every $0\neq s\in S$, there exist $x,y\in T$ such that $0\neq[s,x,y]\in T$.
\end{defn}

\begin{re}
Every system of quotients of a LTS $T$ is a weak system of quotients.
\end{re}
\begin{proof}
Suppose that $S$ is a system of quotients of $T$. Then for all nonzero $s\in S$, there exist $x,y\in T$ such that $[s,x,y]\neq0$ and at least one of $x$ and
 $y$ belongs to $(T:s)$. Thus $0\neq[s,x,y]\in T$.\end{proof}

However, the converse is false. Adapted from the examples of Utumi in \cite{U} and Siles Molina in \cite{S}, we consider the set $\C[x]$ of all polynomials in
 $x$ with complex coefficients. Let $\alpha^{\sigma}$ denote the complex conjugate of a complex number $\alpha$. Then the following ternary product makes $\C[x]$
  into a LTS, denoted by $P$:
$$\left[\sum_{r=0}^{m}\alpha_rx^r, \sum_{s=0}^{n}\beta_sx^s, \sum_{t=0}^{l}\gamma_tx^t\right] = \sum_{r=0}^{m} \sum_{s=0}^{n} \sum_{t=0}^{l}
 (\alpha_r\beta_s^\sigma - \alpha_r^\sigma\beta_s)(\gamma_t + \gamma_t^\sigma)x^{r+s+t}.$$

Let $I=\langle x^4\rangle$ be the ideal of $P$ and let $S=P/I$. Let $T=\{\overline{\alpha_0}+\overline{\alpha_2x^2}+\overline{\alpha_3x^3}\}\subseteq S$.
Then $T$ is a subsystem of $S$ and $S$ is a weak system of quotients of $T$.

Indeed, $S=\overline{\C x}\dotplus T$ and $[\overline{1}, \overline{i}, \overline{1+i}]=\overline{-4i}\neq\overline{0}$. It follows that for all
 $\overline{0}\neq\overline{t}\in T$, $\overline{0}\neq[\overline{t}, \overline{i}, \overline{1+i}]=\overline{-4it}\in T$. Take
  $\overline{0}\neq\overline{\alpha_1x}+\overline{t}=
\overline{\alpha_0}+\overline{\alpha_1x}+\overline{\alpha_2x^2}+\overline{\alpha_3x^3}\in S$. If $\overline{\alpha_1x}=\overline{0}$, then
$\overline{0}\neq[\overline{t}, \overline{i}, \overline{1+i}]\in T$; if $\overline{\alpha_1x}\neq\overline{0}$, then $\overline{\alpha_1}\neq\overline{0}$,
 and hence $\overline{0}\neq[\overline{\alpha_1x}+\overline{t},\overline{i}, \overline{(1+i)x^2}]=\overline{-4i}(\overline{\alpha_0x^2}+\overline{\alpha_1x^3})\in T$.

Suppose that $S$ is a system of quotients of $T$, then for $\overline{x}, \overline{x^3}\in S$, there must be $\overline{t}, \overline{t'}\in T$ such
that $[\overline{x^3}, \overline{t}, \overline{t'}]\neq\overline{0}$ and at least one of $\overline{t}$ and $\overline{t'}$ belongs to $(T:\overline{x})$.
Since $[\overline{x^3}, \overline{t}, \overline{t'}]\neq\overline{0}$, it follows that $\overline{t}=\overline{\alpha_0}$, $\overline{t'}=\overline{\alpha_0'}$,
 and $[\overline{1}, \overline{\alpha_0}, \overline{\alpha_0'}]\neq\overline{0}$, for some $\alpha_0, \alpha_0'\in \C$. Thus
  $[\overline{x}, \overline{t}, \overline{t'}]=[\overline{1}, \overline{\alpha_0}, \overline{\alpha_0'}]\overline{x}\notin T$. But
   if $\overline{t}\in(T:\overline{x})$, then $[\overline{x}, \overline{t}, \overline{t'}]=-[\overline{t}, \overline{x}, \overline{t'}]\in T$;
   if $\overline{t'}\in(T:\overline{x})$, then $[\overline{x}, \overline{t}, \overline{t'}]=[\overline{t'}, \overline{t}, \overline{x}]-[\overline{t'},
   \overline{x}, \overline{t}]\in T$. This contradiction shows that $S$ is not a system of quotients of $T$.

\begin{re}
Let $S$ be a weak system of quotients of a LTS $T$.
\begin{enumerate}[(1)]
\item If $I$ is a nonzero ideal of $S$, then $I\cap T$ is a nonzero ideal of $T$.
\item If $T$ is semiprime (prime), then $S$ is semiprime (prime).
\end{enumerate}
\end{re}

\begin{defn}
Let $T$ be a subsystem of a LTS $S$. Then $S$ is called ideally absorbed into $T$ if for every $0\neq s\in S$, there exists an ideal $I$ of $T$ with $C_{T}(I)=0$
such that $0\neq [s,I,T]+[s,T,I]\subseteq T$.
\end{defn}

\begin{lem}\label{C(T:s)=0}
Let $T$ be a subsystem of a LTS $S$ and take $s\in S$.
\begin{enumerate}[(1)]
\item   If $S$ is a system of quotients of $T$, then $(T:s)$ is an essential ideal of $T$. Moreover, $C_{T}((T:s))=0$.
\item   If $S$ is ideally absorbed into $T$, then $(T:s)$ is an essential ideal of $T$. Moreover, $C_{T}((T:s))=0$.
\end{enumerate}
\end{lem}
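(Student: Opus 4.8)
The plan is to prove both parts in parallel, since the key mechanism is the same: I want to show $C_T((T:s)) = 0$, and then invoke Proposition \ref{center and essential}(\ref{center implies essential}) — which says that if $C_T(I) = 0$ then $I$ is essential — to conclude that $(T:s)$ is essential. So the real content is the centralizer computation, and the ``Moreover'' clause is actually the main claim from which essentiality follows for free.

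For part (1), let $t \in C_T((T:s))$ and suppose $t \neq 0$. Since $S$ is a system of quotients of $T$, applying the definition with the pair $(s, s) := (t, s)$ (i.e. taking the ``$s$'' of the definition to be $t$ and the ``$s'$'' to be our given $s$), there exist $x, y \in T$ with $[t,x,y] \neq 0$ and with $x \in (T:s)$ or $y \in (T:s)$. Say $x \in (T:s)$ (the case $y \in (T:s)$ is symmetric). Then $[t, x, y]$ should vanish because $t$ centralizes $(T:s)$: indeed $t \in C_T((T:s))$ means $[t, (T:s), T] = [T, (T:s), t] = 0$, hence $[t,x,y] = -[x,t,y]$ and by identity (2) $[x,t,y] = -[t,y,x] - [y,x,t]$; now $[y,x,t] \in [T,(T:s),t] = 0$ and $[t,y,x] = -[x,y,t]\in$... — one has to be slightly careful to route everything through the two relations defining $C_T(I)$, namely $[t, I, T] = 0$ and $[T, I, t] = 0$, using the axioms (1) and (2) to rewrite any bracket with one argument in $I$, one equal to $t$, and one free in $T$. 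The upshot is that all such brackets vanish, so $[t,x,y] = 0$, a contradiction. Hence $C_T((T:s)) = 0$.

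For part (2), the argument is a direct unwinding of the hypothesis. Since $S$ is ideally absorbed into $T$, pick for the element $s$ an ideal $I$ of $T$ with $C_T(I) = 0$ and $0 \neq [s,I,T] + [s,T,I] \subseteq T$. The inclusion $[s,I,T] + [s,T,I] \subseteq T$, together with the closure observations preceding Proposition \ref{(T:s) is an ideal} (that $[x,T,\,_T(s)]$ and $[x,\,_T(s),T]$ lie in $T$ whenever the analogous level-one brackets do — this is exactly the inductive argument already carried out in that proof), shows that $I \subseteq (T:s)$; alternatively one invokes the maximality clause of Proposition \ref{(T:s) is an ideal}, since $I$ is an ideal of $T$ with $[I,T,s] + [I,s,T] \subseteq T$ (using axioms (1),(2) to convert $[s,I,T]+[s,T,I]$ into $[I,T,s]+[I,s,T]$). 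Then $C_T((T:s)) \subseteq C_T(I) = 0$ because the centralizer is order-reversing in the obvious sense: any $t$ with $[t,(T:s),T] = [T,(T:s),t] = 0$ in particular kills the subideal $I$. Again essentiality of $(T:s)$ follows from Proposition \ref{center and essential}.

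The main obstacle — really the only place demanding care — is the bracket bookkeeping in part (1): starting from the two identities $[t,I,T]=0$ and $[T,I,t]=0$ that define membership in $C_T(I)$, one must verify that every trilinear expression with exactly one slot in $I=(T:s)$, one slot equal to $t$, and one slot ranging over $T$ is zero, handling all three positions of $t$. This is a short case analysis using only axioms (1) and (2), but it is the crux; everything else (essentiality, the order-reversing property of $C_T$, and all of part (2)) is formal and follows from results already in the excerpt.
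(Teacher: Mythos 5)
Your proof is correct, and in part (1) it takes a slightly different (and arguably cleaner) route than the paper. The paper first proves essentiality of $(T:s)$ directly: for any nonzero ideal $I$ of $T$ it picks $0\neq x\in I$, applies the definition of system of quotients to get $y,z\in T$ with $[x,y,z]\neq 0$ and one of $y,z$ in $(T:s)$, and concludes $0\neq[x,y,z]\in I\cap (T:s)$; it then uses this essentiality to choose $0\neq t\in C_T((T:s))\cap(T:s)$ and derives a contradiction with the quotient property. You invert the order: you kill the centralizer first by applying the definition to the pair $(t,s)$ (so that one of the witnesses $x,y$ lands in $(T:s)$ and $[t,x,y]$ is forced to vanish by $[t,(T:s),T]=[T,(T:s),t]=0$ together with axioms (1),(2)), and then you get essentiality for free from Proposition \ref{center and essential}(\ref{center implies essential}). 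This shows, as you implicitly observe, that essentiality is not actually needed to prove $C_T((T:s))=0$, whereas the paper's intersection trick makes it look necessary; the price is that your essentiality statement leans on Proposition \ref{center and essential}, while the paper's is self-contained (and also exhibits a concrete nonzero element of $I\cap(T:s)$). Your part (2) coincides with the paper's proof: $I\subseteq (T:s)$ by the maximality clause of Proposition \ref{(T:s) is an ideal}, then $C_T((T:s))\subseteq C_T(I)=0$, then essentiality.

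Two small remarks on your part (1). In the case $x\in(T:s)$ no computation is needed at all, since $[t,x,y]\in[t,(T:s),T]=0$ directly; the detour you start there contains the false identity $[t,y,x]=-[x,y,t]$ (antisymmetry holds only in the first two arguments), but since you abandon that computation and the case $y\in(T:s)$ does close correctly via identity (2) and $[y,t,x]=-[t,y,x]\in -[t,(T:s),T]=0$, $[x,y,t]\in[T,(T:s),t]=0$, the argument stands. Also, essentiality presupposes that $(T:s)$ is an ideal, which is Proposition \ref{(T:s) is an ideal} and should be cited in part (1) as well.
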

\begin{proof}
(1) Let $I$ be a nonzero ideal of $T$. Take $0\neq x\in I\subseteq T$. Since $S$ is a system of quotients of $T$, there exist $y,z\in T$ such
that $[x,y,z]\neq 0$ and at least one of $y$ and $z$ belongs to $(T:s)$. By Proposition \ref{(T:s) is an ideal}, $(T:s)$ is an ideal of $T$, and
note that $I$ is an ideal of $T$, it follows that $0\neq[x,y,z]\in (T:s)\cap I$, and hence $(T:s)$ is an essential ideal of $T$.

Now, suppose $C_{T}((T:s))\neq 0$. Since $(T:s)$ is an essential ideal of $T$, there exists $0\neq t\in C_{T}((T:s))\cap (T:s)$. Note that $S$ is a
 system of quotients of $T$, then there exist $t',t''\in T$ such that $[t,t',t'']\neq 0$. However, $[t,t',t'']=0$ since $t\in C_{T}((T:s))$. This
 contradiction proves that $C_{T}((T:s))=0$.

(2) Since $S$ is ideally absorbed into $T$, there is an ideal $I$ of $T$ with $C_{T}(I)=0$ such that $0\neq [s,I,T]+[s,T,I]\subseteq T$. Then
$I\subseteq (T:s)$ by the maximum of $(T:s)$, and so $C_{T}((T:s))\subseteq C_{T}(I)=0$. Thus $(T:s)$ is an essential ideal of $T$ by Proposition
\ref{center and essential}(\ref{center implies essential}).
\end{proof}

We will show the relation between $S$ being a system of quotients of $T$ and $S$ being ideally absorbed into $T$, and the following result is needed.

\begin{lem}\label{CS(I)=0}
Let $S$ be a weak system of quotients of a LTS $T$ and let $I$ be an ideal of $T$ with $C_T(I)=0$. Then there is no nonzero $x\in S$ such that $[x,I,T]=[x,T,I]=0$.
\end{lem}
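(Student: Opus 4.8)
The plan is to argue by contradiction: suppose there is a nonzero $x\in S$ with $[x,I,T]=[x,T,I]=0$. Since $S$ is a weak system of quotients of $T$, there exist $a,b\in T$ with $0\neq[x,a,b]\in T$; set $t_0=[x,a,b]$. The idea is to show that $t_0$ lies in $C_T(I)$, contradicting $C_T(I)=0$. To do this I must check that $[t_0,I,T]=0$ and $[T,I,t_0]=0$, and the way to get at these is to push the elements of $I$ (and $T$) through the inner derivation identity (3) in the definition of a LTS so that they act on the arguments $x$, $a$, $b$ of $t_0=[x,a,b]$ one at a time.

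The key computation is as follows. Take $u,v\in I$ and $w\in T$ (or the other orderings). Using identity (3), $[u,v,[x,a,b]]=[[u,v,x],a,b]+[x,[u,v,a],b]+[x,a,[u,v,b]]$. Now $[u,v,x]=-[x,u,v]\in[x,I,I]\subseteq[x,I,T]=0$; the middle term has $[u,v,a]\in[I,I,T]\subseteq I$, so $[x,[u,v,a],b]\in[x,I,T]=0$ by hypothesis (after reordering via identities (1),(2) to bring it to the form $[x,I,T]$ or $[x,T,I]$); and similarly $[u,v,b]\in I$, so $[x,a,[u,v,b]]\in[x,T,I]=0$. Hence $[u,v,t_0]=0$ for all $u,v\in I$, which (again using (1) and (2) to rearrange, exactly as the excerpt does when it deduces $[T,{_T}(s),x]$ from $[{_T}(s),x,T]+[x,T,{_T}(s)]$) gives $[t_0,I,T]=0$ and $[T,I,t_0]=0$, i.e. $t_0\in C_T(I)$.

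The main obstacle — really the only delicate point — is bookkeeping with the symmetry identities: all three of $[x,I,T]=0$, $[x,T,I]=0$, and their consequences for $[I,x,T]$, $[T,x,I]$, $[I,T,x]$, $[T,I,x]$ must be obtained from the two given vanishings via identities (1) and (2), and one has to be careful that whenever a term $[x,?,?]$ appears, at least one of the two inner slots genuinely sits in $I$ so that the hypothesis applies. The cleanest way to organize this is to first record, as a preliminary observation, that $[x,I,T]=[x,T,I]=0$ implies $[\,\cdot\,]$ vanishes whenever one argument is $x$ and another lies in $I$ (all orderings), which is immediate from (1), (2); then the displayed expansion above finishes the argument. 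Finally, once $t_0\in C_T(I)=0$ we get $[x,a,b]=t_0=0$, contradicting $[x,a,b]\neq 0$, so no such nonzero $x$ exists. $\qed$
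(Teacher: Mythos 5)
Your overall strategy coincides with the paper's: produce $t_0=[x,a,b]\neq 0$ in $T$, show $t_0\in C_T(I)$ by means of the derivation identity (3) and the hypothesis on $x$, and contradict $C_T(I)=0$; your preliminary observation that every bracket containing $x$ in one slot and an element of $I$ in another vanishes (in all orderings) is correct and is indeed what the hypothesis gives via (1) and (2). The gap is in the key computation. As written you take both $u,v\in I$, so expanding $[u,v,[x,a,b]]$ only yields $[I,I,t_0]=0$; under the more charitable reading $u\in I$, $v\in T$ it yields $[I,T,t_0]=0$ and hence, by (1), $[T,I,t_0]=0$. But membership in $C_T(I)$ also requires $[t_0,I,T]=0$, and this cannot be obtained from what you proved by "rearranging via (1),(2)": those identities only permute the three arguments, and from $[T,I,t_0]=0$ the Jacobi identity (2) gives merely $[t_0,u,w]=[t_0,w,u]$ for $u\in I$, $w\in T$, not vanishing. (A small additional slip: $[u,v,x]=-[x,u,v]$ is not a LTS identity, since only the first two slots are antisymmetric; your conclusion $[u,v,x]=0$ is still correct, but via the preliminary observation, not that equality.)

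Closing the gap requires a second, genuinely different application of identity (3), which is exactly what the paper's proof does. For $u\in I$, $w\in T$ one writes $[[x,a,b],u,w]=[x,a,[b,u,w]]-[b,[x,a,u],w]-[b,u,[x,a,w]]$; the first two terms vanish because $[b,u,w]\in[T,I,T]\subseteq I$ and $[x,a,u]\in[x,T,I]=0$, but the third term contains $[x,a,w]$, which lies in $S$ and need not lie in $T$, so it must be expanded once more, with the pair $(b,u)\in T\times I$ acting as a derivation: $[b,u,[x,a,w]]=[[b,u,x],a,w]+[x,[b,u,a],w]+[x,a,[b,u,w]]$, and now every term is killed by $[T,I,x]=0$, $[x,I,T]=0$ or $[x,T,I]=0$. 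Only after this two-step expansion do you get $[t_0,I,T]=0$, hence $t_0\in C_T(I)=0$ and the desired contradiction. So the plan is right and half of the verification ($[T,I,t_0]=0$) is essentially yours, but the second expansion is an essential part of the argument, not bookkeeping that (1) and (2) can absorb.
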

\begin{proof}
Suppose that there is $0\neq x\in S$ such that $[x,I,T]=[x,T,I]=0$. Apply that $S$ is a weak system of quotients of $T$ to find $y,z\in T$ such
 that $0\neq[x,y,z]\in T$. Since $C_T(I)=0$, $[[x,y,z],I,T]+[T,I,[x,y,z]]\neq0$. However, note that
\begin{equation*}
\begin{split}
[[x,y,z],I,T]&\subseteq[x,y,[z,I,T]]+[z,[x,y,I],T]+[z,I,[x,y,T]]\\
             &\subseteq[x,T,I]+0+[[T,I,x],T,T]+[x,[T,I,T],T]+[x,T,[T,I,T]]\\
             &\subseteq0+0+[x,I,T]+[x,T,I]=0,
\end{split}
\end{equation*}
and
\begin{equation*}
\begin{split}
[T,I,[x,y,z]]&\subseteq[[T,I,x],T,T]+[x,[T,I,T],T]+[x,T,[T,I,T]]\\
             &\subseteq0+[x,I,T]+[x,T,I]=0.
\end{split}
\end{equation*}
This is a contradiction.
\end{proof}

\begin{thm}\label{main thm}
Let $T$ be a subsystem of a LTS $S$. Then $S$ is a system of quotients of $T$ if and only if $S$ is ideally absorbed into $T$.
\end{thm}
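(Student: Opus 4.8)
The plan is to prove the two implications separately, with the forward direction being largely a definitional unwinding and the reverse direction being the substantive one.

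First I would prove that if $S$ is a system of quotients of $T$, then $S$ is ideally absorbed into $T$. Fix $0\neq s\in S$. The natural candidate for the required ideal is $I=(T:s)$ itself. By Lemma \ref{C(T:s)=0}(1), $(T:s)$ is an essential ideal with $C_T((T:s))=0$, so the "centralizer" part of the definition of ideally absorbed is immediate. What remains is to check that $0\neq[s,(T:s),T]+[s,T,(T:s)]\subseteq T$. The containment in $T$ is exactly the defining property of $(T:s)$ (using the remarks following its definition, which give $[x,T,{_T}(s)]$, $[x,{_T}(s),T]\subseteq T$ for $x\in(T:s)$, and in particular with $s\in{_T}(s)$). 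For nonvanishing: if $[s,(T:s),T]+[s,T,(T:s)]=0$, then $s$ would centralize $(T:s)$ in the appropriate sense; but applying the system-of-quotients condition to the pair $(s,s)$ produces $x,y\in T$ with $[s,x,y]\neq0$ and $x\in(T:s)$ or $y\in(T:s)$, and either way (using identities (1) and (2) of a LTS to move the element of $(T:s)$ into the first slot) one contradicts $[s,(T:s),T]+[s,T,(T:s)]=0$. Hence $S$ is ideally absorbed into $T$.

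Next I would prove the converse: assume $S$ is ideally absorbed into $T$ and show $S$ is a system of quotients. Let $s,s'\in S$ with $s\neq0$. Since $S$ is ideally absorbed into $T$, pick an ideal $I$ of $T$ with $C_T(I)=0$ and $0\neq[s,I,T]+[s,T,I]\subseteq T$. First, by the maximality clause of Proposition \ref{(T:s) is an ideal} applied to $s'$ (the ideal $I$ satisfies $[I,T,s']+[I,s',T]\subseteq T$, which I would verify from $0\neq[s',I',T]+[s',T,I']\subseteq T$ for the ideal $I'$ absorbing $s'$, intersecting $I\cap I'$ if necessary so that a single ideal works for both $s$ and $s'$), we get $I\subseteq(T:s')$. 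Now I need $x,y\in I$ with $[s,x,y]\neq0$; then $x,y\in(T:s')$ gives \eqref{x or y in(T:s)}. This is where Lemma \ref{CS(I)=0} does the work: since $S$ is a weak system of quotients of $T$ (which I should check follows from being ideally absorbed — given $0\neq s$, the ideal $I$ with $0\neq[s,I,T]+[s,T,I]$ gives some $0\neq[s,x,y]\in T$ with $x$ or $y$ in $I$, and symmetrizing via the LTS identities puts both arguments in $I$ after adjusting), Lemma \ref{CS(I)=0} says no nonzero element of $S$ kills $I$ from both sides, so $[s,I,T]\neq0$ or $[s,T,I]\neq0$; from there a short manipulation using identity (2) yields $x,y\in I$ (possibly after one more reduction) with $[s,x,y]\neq0$, establishing \eqref{[sxy]neq0}.

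The main obstacle I anticipate is the bookkeeping in the reverse direction around producing the two arguments of the bracket \emph{inside} $I$ (rather than one inside $I$ and one in $T$), since the absorption hypothesis only gives $[s,I,T]+[s,T,I]\subseteq T$ with a single $I$-slot. The resolution is to first use Lemma \ref{CS(I)=0} to guarantee $[s,I,T]\neq0$, say $[s,a,t]\neq0$ with $a\in I$, $t\in T$; then because $I$ is an ideal and $C_T(I)=0$, the element $[s,a,t]$ cannot be centralized by $I$, so applying brackets with elements of $I$ and expanding via identity (3) brings a second $I$-element into play, at which point identities (1)–(2) let us rearrange into the form $[s,x,y]$ with $x,y\in I\subseteq(T:s')$. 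Handling the case $s'\in T$ (where $(T:s')=T$) and the edge case where $I$ must be shrunk to a common ideal for $s$ and $s'$ are the remaining details, but both are routine given that intersections of ideals with trivial centralizer again have trivial centralizer.
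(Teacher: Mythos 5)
Your forward direction is fine and is essentially the paper's argument: take $I=(T:s)$, get $C_T((T:s))=0$ from Lemma \ref{C(T:s)=0}(1), get the containment in $T$ from the definition of $(T:s)$ together with identities (1)--(2), and get nonvanishing by applying the quotient condition to the pair $(s,s)$.

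The reverse direction, however, has a genuine gap, and it comes from two related missteps. First, you read the definition of system of quotients as requiring \emph{both} entries $x,y$ of the nonvanishing bracket to lie in the distinguished ideal; in fact condition \eqref{x or y in(T:s)} only asks that $x\in(T:s')$ \emph{or} $y\in(T:s')$, so your final ``short manipulation using identity (3) to bring a second $I$-element into play'' is both unnecessary and left unproved. Second, and more seriously, your actual mechanism for producing the nonzero bracket is to pass to the single ideal $J=I\cap I'$ (where $I$ absorbs $s$ and $I'$ absorbs $s'$) and then invoke Lemma \ref{CS(I)=0} for $J$; this requires $C_T(J)=0$, which you dismiss as ``routine'' on the grounds that intersections of zero-centralizer ideals have zero centralizer. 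That closure property is nowhere in the paper, is only clear when $T$ is semiprime (where zero centralizer is equivalent to essentialness, and essential ideals are closed under intersection), and the theorem carries no semiprimeness hypothesis; indeed the analogous statement for two-sided annihilators of ideals in associative rings already fails (e.g.\ upper triangular $2\times2$ matrices, with the two ``row/column'' ideals whose intersection is the strictly upper triangular part). So as written the key step $[s,J,T]+[s,T,J]\neq0$ is unjustified. The fix is to drop the intersection entirely: since only one slot needs to land in $(T:s')$, apply Lemma \ref{CS(I)=0} with the element $s$ and the ideal $I'$ absorbing $s'$ (or, as the paper does, with $(T:s')$ itself, which has zero centralizer by Lemma \ref{C(T:s)=0}(2), using the maximality clause of Proposition \ref{(T:s) is an ideal} to see $I'\subseteq(T:s')$). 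This gives $x\in(T:s')$ and $y\in T$ with $[s,x,y]\neq0$ or $[s,y,x]\neq0$, which is exactly what the definition demands, and no claim about intersections or about both entries lying in an ideal is needed.
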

\begin{proof}
Suppose that $S$ is a system of quotients of $T$, and take $0\neq s\in S$. By Proposition \ref{(T:s) is an ideal} and Lemma \ref{C(T:s)=0}(1), $(T:s)$ is
an ideal of $T$ with $C_T((T:s))=0$. Then $[s,(T:s),T]+ [T,(T:s),s]\neq0$, and so $[s,(T:s),T]+[s,T,(T:s)]\neq0$. Use the definition of $(T:s)$, we have
$$[s,(T:s),T]+[s,T,(T:s)]\subseteq[(T:s),T,s]+[(T:s),s,T]\subseteq T.$$
That is, $S$ is ideally absorbed into $T$.

Conversely, suppose that $S$ is ideally absorbed into $T$, then $S$ is a weak system of quotients of $T$. Take $0\neq s,s'\in S$. By Lemma \ref{C(T:s)=0}(2),
we have $C_{T}((T:s'))=0$, and hence $[s,(T:s'),T]+[s,T,(T:s')]\neq0$ by Lemma \ref{CS(I)=0}. Therefore there are $x\in(T:s')$ and $y\in T$ such that
 $[s,x,y]\neq0$ or $[s,y,x]\neq0$, which proves that $S$ is a system of quotients of $T$.
\end{proof}

In the proof of Theorem \ref{main thm}, we can conclude that $S$ is a system of quotients of $T$ if and only if $S$ is a weak system of quotients of $T$
satisfying $C_T((T:s))=0, \forall s\in S$.


We now introduce the notion of Martindale-like quotients of a LTS $T$ in \cite{GG}, and investigate the connection of which and the notion of system of quotients
 we have defined.

\begin{defn}{\rm\supercite{GG}}
A filter $\mF$ on a Lie algebra is a nonempty family of nonzero ideals such that for any $I_1, I_2 \in \mF$ there exists $I\in \mF$ such that $I\subseteq I_1\cap I_2$.
 Moreover, $\mF$ is a power filter if for any $I\in \mF$ there exists $K\in \mF$ such that $K\subseteq[I,I]$.

A Lie algebra of Martindale-like quotients $Q$ of a Lie algebra $L$ with respect to a power filter of sturdy ideals $\mF$ if $L\subseteq Q$ such that for every
nonzero element $q\in Q$ there exists an ideal $I_q\in \mF$ such that $0\neq [q,I_q]\subseteq L.$
\end{defn}

\begin{defn}{\rm\supercite{GG}}
A filter $\mF$ on a LTS is a nonempty family of nonzero ideals such that for any $I_1, I_2 \in \mF$ there exists $I\in \mF$ such that $I\subseteq I_1\cap I_2$.
Moreover, $\mF$ is a power filter if for any $I\in \mF$ there exists $K\in \mF$ such that $K\subseteq[I,T,I]$.

Let $T$ be a LTS and let $\mF$ be a filter on $T$. A LTS $S$ is a LTS of Martindale-like quotients of $T$ with respect to $\mF$ if $S$ is $\mF$-absorbed into $T$, i.e.,
 for each $0\neq s\in S$ there exists an ideal $I_s\in \mF$ such that
$$0\neq [s,I_s,T]+[s,T,I_s]\subseteq T.$$
\end{defn}

Note that if $T$ is a semiprime LTS, then the notion of ideally absorbed(i.e., the notion of system of quotients) is equivalent to the notion of Martindale quotient
over the set of all essential ideals.
We now show the relationship between a system of quotients of a LTS and an algebra of quotients of a Lie algebra.

\begin{defn}{\rm\supercite{S}}
Let $L\subseteq Q$ be an extension of Lie algebras. We say that $Q$ is an algebra of quotients of $L$ if the following equivalent conditions are satisfied:
\begin{enumerate}[(i)]
\item Given $p$ and $q$ in $Q$ with $p\neq0$, there exists $x$ in $L$ such that
    $$[x,p]\neq0 \text{ and } [x, {_L(q)}]\subseteq L,$$
    where $_L(q)$ is the linear span in $Q$ of $q$ and the elements of the form $\ad x_1\cdots \ad x_nq$ with $n\in\N$ and $x_1,\cdots,x_n\in L$.
\item For every nonzero element $q$ in $Q$ there exists an ideal $I$ of $L$ with $C_L(I)=0$ such that $0\neq[I,q]\subseteq L$.
\end{enumerate}
\end{defn}

\begin{prop}
Let $Q$ be an algebra of quotients of a Lie algebra $L$. Then $Q$ and $L$ can be regarded as LTS $\tilde{Q},\tilde{L}$ with $[a,b,c]=[[a,b],c]$, respectively.
Then $\tilde{Q}$ is a system of quotients of $\tilde{L}$.
\end{prop}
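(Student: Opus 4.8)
The plan is to push everything into the Lie algebra $Q$ by means of the identity
$$R(x,y)(z)=[z,x,y]=[[z,x],y]=[y,[x,z]]=\ad(y)\ad(x)(z),$$
valid for all $x,y,z\in Q$. First I would record that $\tilde L$ is a subsystem of $\tilde Q$, since $[[L,L],L]\subseteq L$, so the statement makes sense. Applying the identity repeatedly, every spanning element $R(x_1,y_1)\cdots R(x_n,y_n)(s')$ of ${_{\tilde L}}(s')$ equals the $\ad$-monomial $\ad(y_1)\ad(x_1)\cdots\ad(y_n)\ad(x_n)(s')$ with all entries in $L$; hence ${_{\tilde L}}(s')\subseteq{_L}(s')$, the submodule of $Q$ occurring in the definition of algebra of quotients.

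The key reduction is: \emph{if $x\in L$ satisfies $[x,{_L}(s')]\subseteq L$, then $x\in(\tilde L:s')$.} I would verify the two inclusions defining $(\tilde L:s')$ separately. If $w\in{_{\tilde L}}(s')\subseteq{_L}(s')$ and $l\in L$, then $[x,w]\in L$, so $[x,w,l]=[[x,w],l]\in[L,L]\subseteq L$, which gives $[x,{_{\tilde L}}(s'),L]\subseteq L$. For the other inclusion, the Jacobi identity yields $[x,l,w]=[[x,l],w]=[x,[l,w]]+[[x,w],l]$ for $l\in L$, $w\in{_{\tilde L}}(s')$; here $[l,w]\in{_L}(s')$ because ${_L}(s')$ is stable under $\ad(l)$, so $[x,[l,w]]\in L$, while $[[x,w],l]\in L$ as before, whence $[x,L,{_{\tilde L}}(s')]\subseteq L$.

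Finally I would assemble the argument. One first notes $C_Q(L)=0$: given $0\neq u\in Q$, the first of the equivalent conditions in the definition of algebra of quotients (with $p=u$) produces $x\in L$ with $[u,x]\neq0$. Now take $s,s'\in\tilde Q$ with $s\neq0$. The same condition applied with $p=s$ and $q=s'$ gives $x\in L$ with $[x,s]\neq0$ and $[x,{_L}(s')]\subseteq L$; by the reduction above $x\in(\tilde L:s')$, and $[s,x]=-[x,s]\neq0$. Since $C_Q(L)=0$ there is $y\in L$ with $[[s,x],y]\neq0$, i.e.\ $[s,x,y]\neq0$ in $\tilde Q$. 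Thus $x$ and $y$ satisfy (\ref{[sxy]neq0}) and (\ref{x or y in(T:s)}), so $\tilde Q$ is a system of quotients of $\tilde L$. The one step needing care is the reduction in the second paragraph, since it mixes the binary and ternary brackets and relies on the $\ad$-stability of ${_L}(s')$; everything else is a direct translation.
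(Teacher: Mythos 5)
Your proof is correct, but it takes a genuinely different route from the paper's. You work with condition (i) of the definition of algebra of quotients: after noting $R(x,y)=\ad(y)\ad(x)$ on $Q$, hence ${_{\tilde L}}(s')\subseteq{_L}(s')$, you show that any $x\in L$ with $[x,{_L}(s')]\subseteq L$ lies in $(\tilde L:s')$ (your Jacobi computation $[[x,l],w]=[x,[l,w]]+[[x,w],l]$ together with the $\ad$-stability of ${_L}(s')$ is right), and you then manufacture the second element $y$ from $C_Q(L)=0$, so the elementwise definition of system of quotients is verified directly. The paper instead uses condition (ii): it takes an ideal $I$ of $L$ with $C_L(I)=0$ and $0\neq[q,I]\subseteq L$, checks that the induced ideal $\tilde I$ of $\tilde L$ has zero centralizer and that $0\neq[q,\tilde I,\tilde L]+[q,\tilde L,\tilde I]\subseteq\tilde L$, i.e.\ that $\tilde Q$ is ideally absorbed into $\tilde L$, and then concludes via Theorem \ref{main thm}. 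Your argument is self-contained: it needs neither Theorem \ref{main thm} nor the transfer of the zero-centralizer condition from $I$ to $\tilde I$, and it exhibits the pair $(x,y)$ explicitly. The paper's version is shorter given the ideal-absorption machinery already established and fits the Martindale-like-quotients framework used throughout; yours is the more elementary verification of the definition.
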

\begin{proof}
For any $0\neq q\in \tilde{Q}=Q$, since $Q$ is an algebra of quotients of a Lie algebra $L$, there exists a nonzero ideal $I$ of $L$ such that $C_L(I)=0$ and
 $0\neq[q,I]\subseteq L$. Let $\tilde{I}$ denote the induced ideal of $\tilde{L}$ by $I$ with $[a,b,c]=[[a,b],c]$. Suppose $x\in C_{\tilde{L}}(\tilde{I})$,
 then $[[x,I],L]=[x,\tilde{I},\tilde{L}]=0$, and so $[x,I]=0$. Then $x\in C_L(I)=0$, and hence $C_{\tilde{L}}(\tilde{I})=0$.

Note that $0\neq[q,I]\subseteq L$, and it follows that
$$0\neq[q,\tilde{I},\tilde{L}]=[[q,I],L]\subseteq[L,L]\subseteq L=\tilde{L}.$$
Then
$$[q,\tilde{L},\tilde{I}]\subseteq[\tilde{I},q,\tilde{L}]+[\tilde{L},\tilde{I},q]
\subseteq\tilde{L}+[[L,I],q]\subseteq\tilde{L}.$$
Therefore, $\tilde{Q}$ is a system of quotients of $\tilde{L}$.
\end{proof}

\begin{prop}\label{C(L(I))=0 iff C(I)=0}
Let $T$ be a LTS and $L(T)=T\oplus \LL(T,T)$ be its standard imbedding. Let $I$ be an ideal of $T$.
\begin{enumerate}[(1)]
\item $C_{L(T)}(I\oplus \LL(I,T))\cap T=C_T(I)$.
\item $C_{L(T)}(I\oplus \LL(I,T))=0$ if and only if $C_T(I)=0$.
\end{enumerate}
\end{prop}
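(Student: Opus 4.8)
The plan is to establish part (1) first by a direct double inclusion, and then derive part (2) as an easy consequence. For part (1), write $J := I\oplus\LL(I,T)$, which is an ideal of $L(T)$ because $I$ is an ideal of $T$; indeed $[T,I,T]\subseteq I$ gives $[\LL(T,T),\LL(I,T)]\subseteq\LL(I,T)$ and $[T,\LL(I,T)]\subseteq I$, etc. An element of $L(T)$ lying in $T$ is just an element $x\in T$ (viewed in the degree-$1$ component). The condition $x\in C_{L(T)}(J)$ means $[x,J]=0$, i.e. $[x,I]=0$ in $L(T)$ (the degree reasons force $[x,\LL(I,T)]$ and $[x,I]$ to be computed separately, but since $J$ is spanned by $I$ and $\LL(I,T)$ it suffices to kill both). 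Now $[x,I]=0$ in $L(T)$ unpacks, via the bracket formulas in the standard imbedding, to $\LL(x,I)=0$ and — using that $[\LL(u,v),w]$ encodes $[u,v,w]$ — to $[x,I,T]=0$; and $[x,\LL(I,T)]=0$ unpacks to $[I,T,x]=0$, equivalently $[x,I,T]=0$ again together with $[T,I,x]=0$. Matching this against the definition $C_T(I)=\{x\in T\mid [x,I,T]=[T,I,x]=0\}$ gives $x\in C_{L(T)}(J)\cap T \iff x\in C_T(I)$, which is (1). The one subtlety worth spelling out is which bracket relations in $T$ are forced by $[x,\LL(i,y)]=0$ versus $[x,i]=0$ for $i\in I$, $y\in T$; I would compute $[x,\LL(i,y)]=(\LL([i,y,?],x)\text{-type terms},\,-[i,y,x])$ from the product formula and read off that the degree-$1$ part vanishing is exactly $[i,y,x]=0$ for all $i,y$, i.e. $[I,T,x]=0$, which by identities (1)–(2) for LTS is equivalent to $[x,I,T]=0$ as noted right before Proposition~\ref{(T:s) is an ideal} in the excerpt.

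For part (2): if $C_{L(T)}(J)=0$ then a fortiori $C_{L(T)}(J)\cap T=0$, so $C_T(I)=0$ by (1). Conversely, suppose $C_T(I)=0$ and take $0\neq \xi\in C_{L(T)}(J)$; write $\xi = x + \sum\LL(a_k,b_k)$ with $x\in T$ and $\sum\LL(a_k,b_k)\in\LL(T,T)$. Since $J$ is $\Z_2$-graded ($I$ in degree $1$, $\LL(I,T)$ in degree $0$) and $L(T)$ is $\Z_2$-graded, the centralizer $C_{L(T)}(J)$ is a graded subspace, so we may assume $\xi$ is homogeneous. If $\xi = x\in T$ is nonzero, then $x\in C_{L(T)}(J)\cap T = C_T(I) = 0$, contradiction. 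If $\xi = \sum\LL(a_k,b_k)\in\LL(T,T)$ is nonzero, then $[\xi, I]=0$ in degree $1$ says $[a_k,b_k,?]$-combination kills $I$, i.e. $\xi$ as an operator annihilates $I$: $\xi(I)=0$, equivalently $\sum[a_k,b_k,I]=0$; and $[\xi,\LL(I,T)]=0$ gives, via identity (3) for LTS, that $\xi$ commutes with all $\LL(i,y)$, $i\in I$, $y\in T$, hence $\xi([I,T,T])\subseteq$ — more precisely $[\xi, \LL(i,y)](z) = \xi[i,y,z]-[i,y,\xi z] = 0$ for all $z\in T$, and combined with $\xi(I)=0$ this forces $[\xi(T), I, T]$-type expressions to vanish. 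The clean way to finish is: the nonzero operator $\xi\in\LL(T,T)$ with $\xi(I)=0$ and $[\xi,\LL(I,T)]=0$ produces, for any $z\in T$, $\xi(z)\in T$ with $[\xi(z), I, T]=0$ and $[T,I,\xi(z)]=0$ (derived from the two relations and the LTS identities), so $\xi(T)\subseteq C_T(I)=0$, meaning $\xi=0$ — contradiction. Hence $C_{L(T)}(J)=0$.

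The main obstacle is bookkeeping rather than conceptual: carefully translating the vanishing of brackets $[x, i]$ and $[x,\LL(i,y)]$ in $L(T)$, via the explicit product formula $[(\LL(x,y),z),(\LL(u,v),w)] = (\LL([u,v,y],x)-\LL([u,v,x],y)+\LL(z,w),\,[x,y,w]-[u,v,z])$, into the two defining conditions $[x,I,T]=0$ and $[T,I,x]=0$ of $C_T(I)$, and then handling the degree-$0$ case of (2) where $\xi$ is a nonzero left-multiplication operator. The grading of $L(T)$ and of the ideal $J$ is the tool that reduces everything to the homogeneous case and makes the degree-$1$ computation of part (1) do most of the work; the degree-$0$ case of part (2) is where I expect to spend the most care, using identity (3) (the derivation identity) to convert $[\xi,\LL(I,T)]=0$ into a statement about $\xi$ acting on $T$ and then invoking $C_T(I)=0$ through part (1).
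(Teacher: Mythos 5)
Your proposal is correct and follows essentially the same route as the paper: part (1) by translating $[x,I]=0$ and $[x,\LL(I,T)]=0$ in $L(T)$ into $[x,I,T]=0$ and $[T,I,x]=0$, and part (2) by reducing to homogeneous elements of the centralizer and showing that a degree-zero centralizing operator $\xi$ satisfies $\xi(T)\subseteq C_T(I)=0$, hence $\xi=0$. The one step you leave implicit --- deriving $[\xi(T),I,T]=0$ and $[T,I,\xi(T)]=0$ from $\xi(I)=0$ and $[\xi,\LL(I,T)]=0$ via the derivation identity (3) --- is exactly the computation the paper carries out, so your sketch completes as intended.
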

\begin{proof}
(1) Suppose that $x\in C_T(I)\subseteq T$, then $[x,I,T]=[T,I,x]=0$. Note that every element $\sum\LL(a,b)\in \LL(T,T)$ with $[\sum\LL(a,b),T]=0$ implies $\sum\LL(a,b)=0$.
It follows that $\LL(x,I)=0$. The relation
$$[x,I\oplus \LL(I,T)]=\LL(x,I)\oplus[I,T,x]=0$$
shows that $x\in C_{L(T)}(I\oplus \LL(I,T))\cap T$.

Conversely, let $x\in C_{L(T)}(I\oplus \LL(I,T))\cap T$. Then $[x,I\oplus \LL(I,T)]=0$, and so $\LL(x,I)=0, [I,T,x]=0$. Thus $[x,I,T]=0$, it follows that $x\in C_T(I)$.

(2) Only sufficiency needs proof. If $C_T(I)=0$ let us prove that $C_{L(T)}(I\oplus \LL(I,T))=0$. Note that $C_{L(T)}(I\oplus \LL(I,T))\cap T=0$. Then for any $\LL(y,z)\in C_{L(T)}(I\oplus \LL(I,T))$,
 the equation $[\LL(y,z), I\oplus \LL(I,T)]=0$ implies that $[y,z,I]=0$ and $[\LL(y,z),\LL(I,T)]=0$. Thus, $\LL([y,z,T],I)\subseteq [\LL(y,z),\LL(T,I)]+\LL([y,z,I],T)=0,$ and so $[[y,z,T],I,T]=0$.
  Notice that
\begin{equation*}
\begin{split}
[T,I,[y,z,T]]\subseteq &[y,z,[T,I,T]]+[[y,z,T],I,T]+[T,[y,z,I],T]\\
             \subseteq &[y,z,I]+0+0=0.
\end{split}
\end{equation*}
Then $[y,z,T]\subseteq C_T(I)=0$, and this proves $\LL(y,z)=0$. Therefore, $C_{L(T)}(I\oplus \LL(I,T))=0$.
\end{proof}


\begin{prop}{\rm\supercite{GG}}\label{S/T-F iff L(S)/L(T)-L(F)}
Let $T$ be a subsystem of a LTS $S$ and let $L(T)$ and $L(S)$ be standard embeddings of $T$ and $S$, respectively. Then $S$ is a LTS of Martindale-like
quotients of $T$ with respect to a power filter $\mF$ of ideals with zero centralizer on $T$ if and only if $L(S)$ is a Lie algebra of quotients of $L(T)$
with respect to the power filter $L(\mF)=\{I\oplus \LL(I,T)| I\in \mF\}$ of ideals with zero centralizer on $L(T)$.
\end{prop}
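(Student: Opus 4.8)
The plan is to establish the biconditional by moving back and forth between the triple system and its standard embedding, using the unpacking of the Lie bracket on $L(T)=T\oplus\LL(T,T)$ recalled in Section~2 together with Proposition~\ref{C(L(I))=0 iff C(I)=0}, which already identifies $C_{L(T)}(I\oplus\LL(I,T))=0$ with $C_T(I)=0$. Thus the filter $L(\mF)=\{I\oplus\LL(I,T)\mid I\in\mF\}$ automatically consists of ideals of $L(T)$ with zero centralizer precisely when $\mF$ consists of ideals of $T$ with zero centralizer, so the only real content is to match the absorption conditions: $0\neq[s,I_s,T]+[s,T,I_s]\subseteq T$ for triple systems versus $0\neq[q,J_q]\subseteq L(T)$ for Lie algebras, and to check that $L(\mF)$ is again a power filter.

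First I would verify that $L(\mF)$ is a power filter on $L(T)$: given $I_1\oplus\LL(I_1,T)$ and $I_2\oplus\LL(I_2,T)$ in $L(\mF)$, pick $I\in\mF$ with $I\subseteq I_1\cap I_2$; then $I\oplus\LL(I,T)\subseteq (I_1\oplus\LL(I_1,T))\cap(I_2\oplus\LL(I_2,T))$, using that $\LL(I,T)\subseteq\LL(I_j,T)$. For the power condition, given $I\in\mF$ choose $K\in\mF$ with $K\subseteq[I,T,I]$; one then checks $K\oplus\LL(K,T)\subseteq[I\oplus\LL(I,T),\,I\oplus\LL(I,T)]$ by computing the bracket in $L(T)$ — the degree-one part of $[I\oplus\LL(I,T),I\oplus\LL(I,T)]$ contains $[I,T,I]\supseteq K$, and the degree-zero part contains $\LL(K,T)$ since $\LL([I,T,I],T)\subseteq\LL(I,T)\cdot$-type terms land in the commutator of $\LL(I,T)$ with itself and with $I$. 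This is routine once the product formula in Section~2 is expanded.

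Next, the equivalence of the absorption conditions. For the forward direction, assume $S$ is a LTS of Martindale-like quotients of $T$ with respect to $\mF$. A general nonzero element of $L(S)$ has the form $q=\LL(a,b)+s$ with $\LL(a,b)\in\LL(S,S)$ and $s\in S$. I would treat the two graded components: applying the hypothesis to $s$ (when $s\neq0$) gives $I_s\in\mF$ with $0\neq[s,I_s,T]+[s,T,I_s]\subseteq T$, which in $L(S)$ reads $0\neq[\LL(I_s,T),s]\subseteq T\subseteq L(T)$ (and also brackets of $s$ against the $\LL(I_s,T)$ part); for the $\LL(a,b)$ component one uses that $[\LL(a,b),\LL(I_s,T)]$ is controlled by $[a,b,I_s]\subseteq S$ and must be pushed into $L(T)$ — here one may need to intersect finitely many $I_s$ as $s$ ranges over the finitely many triple-system elements appearing when $q$ is written out, using the filter property. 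Conversely, given $J=I\oplus\LL(I,T)\in L(\mF)$ with $0\neq[q,J]\subseteq L(T)$ for each nonzero $q\in L(S)$, specialize to $q=s\in S\subseteq L(S)$: then $0\neq[s,I\oplus\LL(I,T)]\subseteq L(T)$, and reading off graded components gives $\LL([s,I,T]\text{-ish})\oplus[s,I,T]$, forcing $[s,I,T]\subseteq T$ and similarly $[s,T,I]\subseteq T$, with nonvanishing inherited; this exhibits $S$ as $\mF$-absorbed into $T$.

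The main obstacle I anticipate is the bookkeeping in the forward direction when $q\in L(S)$ has a nonzero $\LL(S,S)$-component: a single $I_s$ from the triple-system absorption need not simultaneously absorb the Lie-algebra-degree-zero part of $q$, so one must argue that finitely many applications of the filter property produce one ideal $I\in\mF$ that works for all the triple-system elements $a,b,s,\dots$ implicated in $q$, and then check that $I\oplus\LL(I,T)$ absorbs $q$ into $L(T)$ with a genuinely nonzero image. Controlling the nonvanishing (as opposed to mere containment) of $[q,I\oplus\LL(I,T)]$ is the delicate point, and it is exactly where the zero-centralizer hypothesis on the ideals of $\mF$ — via Proposition~\ref{C(L(I))=0 iff C(I)=0} and the no-annihilator statement analogous to Lemma~\ref{CS(I)=0} — is needed. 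Since Proposition~\ref{S/T-F iff L(S)/L(T)-L(F)} is attributed to \cite{GG}, I would at this point simply cite that source for the detailed verification, having indicated the structure of the argument above.
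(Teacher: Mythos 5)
The paper does not prove this proposition at all: it carries the attribution \cite{GG} and is used as a quoted result, so your final move of deferring the detailed verification to \cite{GG} is consistent with the paper's own treatment. Judged as a sketch, however, the one place where you commit to a concrete mechanism contains a genuine gap. In the forward direction you propose to handle $q=\sum_j\LL(a_j,b_j)+s\in L(S)$ by intersecting the finitely many absorbing ideals attached to the elements $a_j,b_j,s$; filter intersection alone cannot work. The graded components of $[q,I\oplus\LL(I,T)]$ contain terms such as $[s,i]=\LL(s,i)$ and $[\LL(a,b),\LL(i,t)]=\LL([i,t,b],a)-\LL([i,t,a],b)$, which still carry elements of $S$ inside an $\LL(\cdot,\cdot)$ slot. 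Knowing $[s,I,T]+[s,T,I]\subseteq T$ only says these operators map $T$ into $T$; it does not make them elements of $\LL(T,T)$, which is what $[q,I\oplus\LL(I,T)]\subseteq L(T)$ requires. It is exactly to absorb such terms that the hypothesis that $\mF$ is a \emph{power} filter is needed: one replaces $I$ by some $K\in\mF$ with $K\subseteq[I,T,I]$ and uses the LTS identities to rewrite every entry inside $T$. Your sketch invokes the power condition only to check that $L(\mF)$ is again a power filter, never in the absorption argument, so as written the forward direction does not go through.

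The nonvanishing is likewise only asserted, in both directions. In the converse direction, $0\neq[s,I\oplus\LL(I,T)]$ could a priori be witnessed solely by $\LL(s,I)$ acting nontrivially on $S$ while annihilating $T$, so "nonvanishing inherited" needs an actual annihilator argument (an analogue of Lemma \ref{CS(I)=0} carried out in $L(S)$, resting on $C_T(I)=0$ via Proposition \ref{C(L(I))=0 iff C(I)=0}), and similarly for the delicate point you flag in the forward direction. There is also the glossed-over identification of $L(T)$ with a graded subalgebra of $L(S)$ (the standard embedding is not functorial), which is part of what \cite{GG} actually establishes. Since the proposition is a cited result, leaning on \cite{GG} for these points is acceptable, but the sketch as it stands should not be mistaken for a proof.
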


\begin{prop}{\rm\supercite{GGN}}\label{T is nondege. iff L(T) is nondege.}
A LTS $T$ is nondegenerate if and only if $L(T)$ is a nondegenerate
Lie algebra.
\end{prop}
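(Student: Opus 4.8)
The plan is to use the $\Z_2$-grading of the standard imbedding, $L(T)=L_0\oplus L_1$ with $L_0=\LL(T,T)$ and $L_1=T$, and to translate the condition $\ad_a^2=0$ into triple-product identities. The first step is a direct computation of the action of $\ad_a^2$ on homogeneous $a$. Using $[y,z]=\LL(y,z)$ for $y,z\in L_1$ and $[\LL(p,q),z]=[p,q,z]$, one checks: for $x\in T=L_1$ the operator $\ad_x^2$ preserves each graded component, acting on $L_1$ by $t\mapsto[t,x,x]$ and on $L_0$ by $\LL(p,q)\mapsto\LL([p,q,x],x)$; while for $d=\LL(p,q)\in L_0$ the operator $\ad_d^2$ acts on $L_1$ by the square of the operator $d$ on $T$ and on $L_0=[L_1,L_1]$ by $[u,v]\mapsto\LL(d^2u,v)+2\LL(du,dv)+\LL(u,d^2v)$.

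Two cases are then immediate. First, if $0\neq a\in L(T)$ is an absolute zero divisor lying in $L_1=T$, then $\ad_a^2|_{L_1}=0$ reads $[t,a,a]=0$ for all $t$, so $T$ is degenerate at $a$; hence ``$L(T)$ degenerate $\Rightarrow T$ degenerate'' follows once we know an absolute zero divisor of $L(T)$ can always be taken inside $L_1$. Second, for the reverse implication, pick $0\neq x\in T$ with $[T,x,x]=0$; then $\ad_x^2$ kills $L_1$, and it kills $L_0$ as well once one has the \emph{propagation identity} $[[p,q,x],x,z]=0$ for all $p,q,z\in T$, whereupon $x$ is a nonzero absolute zero divisor of $L(T)$. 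So the whole statement reduces to (i) the propagation identity and (ii) the claim that some absolute zero divisor of $L(T)$ lies in $L_1$. For (i) the natural attack is to substitute $[t,x,x]=0$, its skew-symmetrized consequences $[x,t,x]=[x,x,t]=0$, and the derived relation $0=\LL(p,q)([t,x,x])=[t,[p,q,x],x]+[t,x,[p,q,x]]$ into the fundamental identity~(3) applied to $\LL(p,q)$ acting on suitable products built from $x$ and $z$; the trap is that the first-slot skew-symmetry $[a,b,c]=-[b,a,c]$ reduces many such manipulations to tautologies. For (ii), an absolute zero divisor $d=\LL(p,q)\in L_0$ yields $d^2=0$ on $T$ and (when $\ch\F\neq 2$) $\LL(dT,dT)=0$, i.e. $[dT,dT,T]=0$, from which one must manufacture a nonzero absolute zero divisor of the triple system; an inhomogeneous $a=a_0+a_1$ is split into the grading-preserving part $\ad_{a_0}^2+\ad_{a_1}^2=0$ and the grading-reversing part $\ad_{a_0}\ad_{a_1}+\ad_{a_1}\ad_{a_0}=0$, and one tries to descend to a homogeneous one.

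The main obstacle is precisely (ii) --- detecting degeneracy on $L_1$ --- compounded by the tautology trap in (i). The clean way to handle this, and the route taken in \cite{GGN}, is structural rather than by brute identity juggling: one passes to the Kantor pair $(T,T)$ determined by $T$ and to the $5$-graded Tits--Kantor--Koecher Lie algebra of that pair, for which the equivalence ``pair nondegenerate $\iff$ TKK algebra nondegenerate'' is available, and then compares that Lie algebra with the standard imbedding $L(T)$, using that the standard imbedding $I\oplus\LL(I,T)$ of an ideal $I$ is an ideal of $L(T)$ with matching centralizers (Proposition~\ref{C(L(I))=0 iff C(I)=0}) and that a nonzero degenerate ideal propagates. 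With this input the inhomogeneous case of (ii) disappears, the propagation identity (i) becomes automatic, and the proposition follows from the homogeneous computations of the first paragraph.
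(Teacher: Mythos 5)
The paper does not prove this proposition at all: it is imported verbatim from \cite{GGN} (note the \emph{supercite} attached to the statement), so there is no in-paper argument to compare your proposal with. Judged as a standalone proof, your proposal has genuine gaps --- and you name them yourself. The homogeneous computations in your first paragraph are correct: for $x\in L_1=T$, $\ad_x^2$ acts on $L_1$ by $t\mapsto[t,x,x]$ (up to sign, using $[x,t,x]=-[t,x,x]$) and on $L_0$ by $\LL(p,q)\mapsto\LL([p,q,x],x)$, and the analogous formulas for $d=\LL(p,q)\in L_0$ hold; these settle the easy reductions. But the two statements on which the whole equivalence rests are left unproved: (i) that $[T,x,x]=0$ forces $[[p,q,x],x,z]=0$ for all $p,q,z\in T$ --- and, as you correctly observe, the obvious use of identity (3) applied to $[p,q,[x,x,z]]$ collapses to $[[p,q,x],x,z]+[x,[p,q,x],z]=0$, a tautology by first-slot skew-symmetry; and (ii) that degeneracy of the $\Z_2$-graded algebra $L(T)$ can be detected on a homogeneous absolute zero divisor of degree $1$: for an inhomogeneous $a=a_0+a_1$, or for $d=\LL(p,q)\in L_0$ with $d^2=0$ and $2\LL(dT,dT)=0$, you give no construction of a nonzero absolute zero divisor of $T$. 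Declaring that the clean route is the Kantor-pair/TKK argument of \cite{GGN} makes your text a reduction plus a citation, not a proof.

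That said, your concluding appeal to \cite{GGN} coincides exactly with how the paper itself treats the statement, so as a reconstruction of what this paper does your proposal is not off target; it simply does not supply the missing mathematics any more than the paper does. If you intend to fill the gaps rather than cite them away, be aware that (i) and (ii) are genuinely nontrivial --- the argument in \cite{GGN} is structural (Kantor pairs and graded/Jordan-theoretic techniques for the associated Lie envelopes), not an identity chase --- and you should also check the precise scalar hypotheses under which \cite{GGN} prove the equivalence, since the statement is quoted here over a field of arbitrary characteristic while the later application in this paper (nondegeneracy of $S$) already assumes $\ch\F\neq2,3$.
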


\begin{prop}{\rm\supercite{S}}\label{L nondege. implies Q nondege.}
If $Q$ is an algebra of quotients of a nondegenerate Lie algebra $L$ and the characteristic of the base field is not 2 or 3, then $Q$ is nondegenerate.
\end{prop}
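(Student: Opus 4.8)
The plan is to argue by contradiction, using the standard reformulation that a Lie algebra is nondegenerate exactly when it has no nonzero \emph{absolute zero divisor}, that is, no $0\neq x$ with $(\ad x)^2=0$. So assume $0\neq q\in Q$ with $(\ad q)^2=0$ on $Q$, and apply condition (ii) of the definition of algebra of quotients to fix an ideal $I$ of $L$ with $C_L(I)=0$ and $0\neq[I,q]\subseteq L$; pick $a\in I$ with $b:=[a,q]\neq 0$, so that $b\in L$.

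First I would extract every consequence of $(\ad q)^2=0$. Since $\ad$ is a Lie homomorphism, $\ad([[q,x],q])=2\,\ad q\,\ad x\,\ad q-\ad x\,(\ad q)^2-(\ad q)^2\,\ad x$, and as $[[q,x],q]=-[q,[q,x]]=0$ and the characteristic is not $2$, this forces $\ad q\,\ad x\,\ad q=0$ for all $x\in Q$. Hence $[q,Q]$ is an abelian subalgebra, $[q,b]=0$, $\ad q\,\ad b=\ad b\,\ad q=0$, and a direct expansion gives $(\ad b)^2=-\ad q\,(\ad a)^2\,\ad q$ and $(\ad b)^3=0$. The goal is now to manufacture, from $q$ and the ideal $I$, a \emph{nonzero} absolute zero divisor of $L$, which contradicts the nondegeneracy of $L$.

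The crux --- the step I expect to be the real obstacle --- is that $b=[a,q]$ itself need not be an absolute zero divisor of $L$: we only control $(\ad b)^3=0$, while $(\ad b)^2=-\ad q\,(\ad a)^2\,\ad q$ may be nonzero. One therefore has to iterate the absorption, bracketing repeatedly with elements of the ideal $I$ and using $[I,q]\subseteq L$ to keep all auxiliary brackets inside $L$, and then balance the quadratic blocks $\ad q\,(\ad a)^2\,\ad q$ against their linearizations $\ad q\,\ad a\,\ad a'\,\ad q$ until the square of some nonzero element of $L$ built out of $[I,q]$ is forced to vanish. Dividing by the coefficients thrown up by these linearizations (and by the $2$ above) is precisely where the hypothesis that the characteristic is not $2$ or $3$ enters.

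An alternative packaging of the same contradiction uses the degenerate (McCrimmon) radical $\mathcal{M}$, which vanishes exactly on nondegenerate Lie algebras when the characteristic is not $2$ or $3$. If $Q$ were degenerate then $0\neq\mathcal{M}(Q)$ would be an ideal of $Q$; but every nonzero ideal $J$ of $Q$ meets $L$ nontrivially --- for $0\neq q'\in J$, condition (ii) yields an ideal $I'$ with $0\neq[I',q']\subseteq L$, and $[I',q']\subseteq[L,J]\subseteq J$ --- so $0\neq\mathcal{M}(Q)\cap L$ is an ideal of $L$. Since an element $x$ of $L$ with $(\ad_Q x)^2=0$ also has $(\ad_L x)^2=0$, and (after checking how $\mathcal{M}$ transfers under the absorption) $\mathcal{M}(Q)\cap L$ is a degenerate ideal of $L$, we would get $\mathcal{M}(Q)\cap L\subseteq\mathcal{M}(L)=0$, a contradiction. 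In every version the heart of the matter is pushing degeneracy of $Q$ down to $L$: the algebra-of-quotients property is what makes this descent possible, and the restriction on the characteristic is what keeps the Jordan-type identities available.
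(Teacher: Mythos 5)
First, a point of comparison: the paper itself gives no proof of this proposition --- it is quoted from Siles Molina's paper \cite{S} --- so the relevant benchmark is the proof there, and your opening reproduces it faithfully. Reformulating nondegeneracy as the absence of nonzero absolute zero divisors, taking $0\neq q\in Q$ with $(\ad q)^2=0$, invoking condition (ii) to get an ideal $I$ with $C_L(I)=0$ and $0\neq[I,q]\subseteq L$, and then deriving $\ad q\,\ad x\,\ad q=0$ (from $\operatorname{char}\neq 2$), $(\ad b)^2=-\ad q\,(\ad a)^2\,\ad q$ and $(\ad b)^3=0$ for $b=[a,q]\in L$: all of this is correct and is exactly how the cited proof begins.

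The genuine gap is the step you yourself flag as ``the real obstacle'': you never get from $(\ad b)^3=0$ to $b=0$. The missing ingredient is Kostrikin's Lemma: over a field (or ring of scalars) with $\frac{1}{6}$, if $(\ad b)^3=0$ then $(\ad b)^2z$ is an absolute zero divisor for every $z$. Applied with $b\in L$, nondegeneracy of $L$ forces $(\ad b)^2z=0$ for all $z\in L$, so $b$ is itself an absolute zero divisor of $L$, hence $b=0$; letting $a$ run over $I$ gives $[I,q]=0$, contradicting $0\neq[I,q]$. This is precisely where $\operatorname{char}\neq 3$ enters, and it is how \cite{S} concludes. Your proposed substitute --- ``iterate the absorption, bracketing repeatedly with elements of $I$ and balance the quadratic blocks against their linearizations'' --- is off target as well as incomplete: once $b\in L$ with $(\ad b)^3=0$ is in hand, no further use of $I$ or of the quotient property is needed; what is needed is the linearization argument inside Kostrikin's Lemma, which you neither state nor prove. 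The alternative packaging via the Kostrikin/McCrimmon radical has the same hole relocated: the assertion that $\mathcal{M}(Q)\cap L$ is ``a degenerate ideal of $L$'', or that it lands in $\mathcal{M}(L)$, is exactly the descent statement that has to be proved, and you leave it as ``after checking how $\mathcal{M}$ transfers''. So both versions stop just short of the decisive step; supplying Kostrikin's Lemma (with proof or precise citation) would close the argument and make it essentially identical to the one in \cite{S}.
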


\begin{thm}\label{S/T implies L(S)/L(T)}
Let $S$ be a system of quotients of a semiprime LTS $T$. Then $L(S)$ is an algebra of quotients of $L(T)$. Moreover, if $T$ is nondegenerate and $\ch{\bf F}\neq2,3$,
 then $S$ is also nondegenerate.
\end{thm}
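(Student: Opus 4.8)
The plan is to reduce everything to results already established in the excerpt, chiefly Proposition~\ref{S/T-F iff L(S)/L(T)-L(F)} (the translation between Martindale-like quotients of LTS and algebras of quotients of the standard embedding) together with Proposition~\ref{C(L(I))=0 iff C(I)=0} (zero centralizer lifts from $I$ to $I\oplus\LL(I,T)$). First I would record the observation, noted in the paragraph preceding Definition of Martindale-like quotients, that since $T$ is semiprime the condition ``$S$ is a system of quotients of $T$'' is exactly ``$S$ is ideally absorbed into $T$'', which is ``$S$ is a LTS of Martindale-like quotients of $T$ with respect to the filter $\mF$ of \emph{all} essential ideals of $T$''; here I must check that this $\mF$ is genuinely a power filter of ideals with zero centralizer, so the hypotheses of Proposition~\ref{S/T-F iff L(S)/L(T)-L(F)} apply. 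That $\mF$ is a filter is the remark after the definition of essential ideal (intersection of essentials is essential). Zero centralizer of each $I\in\mF$ follows from Proposition~\ref{center and essential}(\ref{center implies essential}) using semiprimeness of $T$. The power-filter property requires: for every essential ideal $I$ of $T$, the ideal $[I,T,I]$ is again essential; in a semiprime LTS this holds because if $J$ is a nonzero ideal then $[T,I\cap J,I\cap J]\neq 0$ by semiprimeness, and $[T,I\cap J,I\cap J]\subseteq [I,T,I]\cap J$, so $[I,T,I]$ meets every nonzero ideal.

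Once $\mF$ is verified to be a power filter of ideals with zero centralizer, Proposition~\ref{S/T-F iff L(S)/L(T)-L(F)} gives directly that $L(S)$ is a Lie algebra of Martindale-like quotients of $L(T)$ with respect to the power filter $L(\mF)=\{I\oplus\LL(I,T)\mid I\in\mF\}$ of ideals with zero centralizer on $L(T)$ — and by Proposition~\ref{C(L(I))=0 iff C(I)=0}(2) each member of $L(\mF)$ indeed has zero centralizer in $L(T)$, consistent with that statement. A Lie algebra of Martindale-like quotients with respect to a power filter of \emph{sturdy} ideals with zero centralizer is in particular an algebra of quotients in the sense of condition~(ii) of the algebra-of-quotients definition: for each nonzero $q\in L(S)$ the ideal $I_q\in L(\mF)$ satisfies $C_{L(T)}(I_q)=0$ and $0\neq[q,I_q]\subseteq L(T)$, which is exactly condition~(ii). (If one wants to be careful about the ``sturdy'' hypothesis, one invokes that ideals with zero centralizer in a semiprime Lie algebra are sturdy, and $L(T)$ is semiprime because $T$ is, e.g.\ via the semiprimeness correspondence between $T$ and $L(T)$.) This establishes the first assertion: $L(S)$ is an algebra of quotients of $L(T)$.

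For the moreover part, assume $T$ is nondegenerate and $\ch\F\neq 2,3$. By Proposition~\ref{T is nondege. iff L(T) is nondege.}, $L(T)$ is a nondegenerate Lie algebra. Since $L(S)$ is an algebra of quotients of $L(T)$ by the first part, Proposition~\ref{L nondege. implies Q nondege.} (applicable precisely because $\ch\F\neq 2,3$) shows $L(S)$ is nondegenerate. Applying Proposition~\ref{T is nondege. iff L(T) is nondege.} again, now to $S$ and $L(S)$, we conclude that $S$ is nondegenerate.

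The main obstacle is the first paragraph: one must be scrupulous that the filter of all essential ideals of $T$ satisfies every hypothesis demanded by Proposition~\ref{S/T-F iff L(S)/L(T)-L(F)} — nonempty, closed under intersection-domination, zero centralizer, and the power condition $K\subseteq[I,T,I]$ — and that ``system of quotients'' really coincides with ``$\mF$-absorbed'' for this particular $\mF$ when $T$ is semiprime. Everything after that is a chain of citations to the listed propositions. A minor secondary point is the ``sturdy'' qualifier appearing in the definitions of Martindale-like quotients; I would either note that the cited Proposition~\ref{S/T-F iff L(S)/L(T)-L(F)} already builds it in, or add the one-line remark that zero-centralizer ideals in semiprime (Lie algebras/LTS) are automatically sturdy.
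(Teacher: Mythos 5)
Your proposal is correct and follows essentially the same route as the paper: pass from ``system of quotients'' to Martindale-like quotients with respect to the filter of zero-centralizer (equivalently, by Proposition~\ref{center and essential}, essential) ideals of the semiprime $T$, apply Proposition~\ref{S/T-F iff L(S)/L(T)-L(F)} together with Proposition~\ref{C(L(I))=0 iff C(I)=0} to get condition (ii) for $L(S)$ over $L(T)$, and then chain Propositions~\ref{T is nondege. iff L(T) is nondege.} and \ref{L nondege. implies Q nondege.} for the nondegeneracy claim. The only difference is that you actually verify the power-filter property (that $[I,T,I]$ is an essential ideal), which the paper merely asserts; your verification is sound once one notes that the span of $[I,T,I]$ is indeed an ideal of $T$.
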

\begin{proof}
Since $T$ is semiprime, the set $\mF$ of all ideals of $T$ with zero centralizer is a power filter on $T$ such that $S$ is a LTS of Martindale-like quotients of $T$
with respect to $\mF$. It follows from Proposition \ref{S/T-F iff L(S)/L(T)-L(F)} that $L(S)$ is a LTS of Martindale-like quotients of $L(T)$ with respect to $L(\mF)$.
 Then for each $0\neq x\in L(S)$, there exists an ideal $I\oplus \LL(I,T)\in L(\mF)$ such that $0\neq [x, I\oplus \LL(I,T)]\subseteq L(T)$. Note that $C_{L(T)}(I\oplus \LL(I,T))=0$, since
  $C_T(I)=0$ and by Proposition \ref{C(L(I))=0 iff C(I)=0}. Therefore $L(S)$ is an algebra of quotients of $L(T)$.

Suppose that $T$ is nondegenerate. Then by Proposition \ref{T is nondege. iff L(T) is nondege.}, $L(T)$ is nondegenerate, and so $L(S)$ is nondegenerate by
 Proposition \ref{L nondege. implies Q nondege.}. Therefore, $S$ is nondegenerate again by Proposition \ref{T is nondege. iff L(T) is nondege.}.
\end{proof}

At the end of this paper, we will construct the maximal system of quotients for nondegenerate Lie triple systems.

In \cite{GGN}, the authors built the maximal Lie algebra $Q$ of a Lie algebra $L$ with respect to a filter $\mF$ and showed that if $L$ is
a nondegenerate Lie algebra with finite $\Z$-grading and $\mF$ is a power filter of ideals with zero centralizer on $L$, then $Q$ has a finite $\Z$-grading.
 Moreover, $L$ and $Q$ have the same support. It is also proved that if $S$ is another Lie algebra of $L$ with respect to $\mF$, then there is a Lie monomorphism
 of $S$ into $Q$ which is the identity on $L$.

Let $T$ be a nondegenerate LTS and $\mF$ be the power filter of $T$ consisting of all ideals with zero centralizer. Then $L(T)$ is a nondegenerate Lie algebra by
 Proposition \ref{T is nondege. iff L(T) is nondege.} and $L(\mF)$ is the power filter of $L(T)$ consisting of ideals with zero centralizer by Proposition
  \ref{S/T-F iff L(S)/L(T)-L(F)}. Suppose that the $\Z_2$-graded Lie algebra $Q=Q_0\oplus Q_1$ is the maximal Lie algebra of quotients of $L(T)$ with respect
  to $L(\mF)$. Remember that $Q_1$ is a LTS by putting $[q,q',q'']=[[q,q'],q''], \forall q,q',q''\in Q_1$.

For any nonzero $q\in Q_1$, there exists $I\oplus \LL(I,T)\in L(\mF)$ such that $0\neq[q,I\oplus \LL(I,T)]\subseteq T\oplus \LL(T,T)$. Then $\LL(q,I)\subseteq \LL(T,T)$ and $[T,I,q]\subseteq T$,
and so $[q,I,T]\subseteq [T,T,T]\subseteq T$, and it follows that $[q,I,T]+[q,T,I]\subseteq T$. Since $C_{L(T)}(I\oplus \LL(I,T))=0$, $C_T(I)=0$ by Proposition \ref{C(L(I))=0
iff C(I)=0}, which implies $[q,I,T]+[q,T,I]\neq0$. Therefore, $Q_1$ is a system of quotients of $T$.

Moreover, $Q_1$ is maximal in the sense that if $S$ is another system of quotients of $T$, then there exists a Lie monomorphism $\phi: S\rightarrow Q_1$ which is
the identity on $T$. Now assume that $S$ is a system of quotients of $T$. Then $L(S)$ is an algebra of quotients of $L(T)$ by Proposition \ref{S/T implies L(S)/L(T)}.
 Note that $Q=Q_0\oplus Q_1$ is the maximal Lie algebra of quotients of $L(T)$ with respect to $L(\mF)$. Then there exists a Lie monomorphism $\phi: L(S)\rightarrow Q$
 which is the identity on $L(T)$, and hence $\phi|_S: S\rightarrow Q_1$ is a Lie monomorphism which is the identity on $T$. This implies that $Q_1$ is the maximal
 system of quotients of $T$.

\begin{lem}{\rm\supercite{S}}\label{semisimple Lie alg}
If $L$ is a finite dimensional semisimple Lie algebra over an algebraically closed field of characteristic 0, i.e., the solvable radical of $L$ is zero, then the maximal Lie algebra of quotients of $L$
 is $L$.
\end{lem}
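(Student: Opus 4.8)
The plan is to prove the stronger statement that \emph{every} Lie algebra of quotients $Q$ of $L$ equals $L$; since the maximal Lie algebra of quotients of $L$ is constructed with respect to the filter of all ideals of $L$ with zero centralizer and is in particular itself a Lie algebra of quotients of $L$, this yields the lemma at once.

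The first step is to identify that filter. Since $L$ is finite dimensional and semisimple, $L=L_1\oplus\cdots\oplus L_n$ with each $L_i$ a simple ideal, and every ideal $I$ of $L$ is a partial sum $I=\bigoplus_{i\in A}L_i$ for some $A\subseteq\{1,\dots,n\}$ (project $I$ onto each factor and use simplicity of $L_i$ together with $[L_i,L_j]=0$ for $i\ne j$). As $Z(L_i)=0$ for every $i$, one computes $C_L(I)=\bigoplus_{i\notin A}L_i$, so the only ideal of $L$ with zero centralizer is $L$ itself. Hence for any Lie algebra of quotients $Q$ of $L$ the defining condition reads exactly: $0\ne[q,L]\subseteq L$ for every $0\ne q\in Q$; equivalently, $L$ is an ideal of $Q$ and $C_Q(L)=0$.

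Now I would exploit that $\ch\F=0$ and $L$ is semisimple. The assignment $q\mapsto(\ad q)|_L$ defines a Lie algebra homomorphism $\rho\colon Q\to\mathrm{Der}(L)$: it is well defined because $[L,q]\subseteq L$, and $\ker\rho=C_Q(L)=0$, so $\rho$ is injective. Since every derivation of a finite dimensional semisimple Lie algebra in characteristic $0$ is inner, the restriction $\rho|_L$ already maps $L$ onto $\mathrm{Der}(L)=\ad_L(L)$, hence $\rho(Q)=\mathrm{Der}(L)=\rho(L)$; injectivity of $\rho$ then forces $Q=L$. In particular the maximal Lie algebra of quotients of $L$ is $L$.

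The one place where genuine structure theory is used is the collapse of the filter: the assertion that $L$ is the \emph{unique} ideal of a finite dimensional semisimple Lie algebra with zero centralizer is what turns the Martindale-type quotient condition into the bare inclusion $[q,L]\subseteq L$, after which the classical fact that semisimple Lie algebras in characteristic $0$ have only inner derivations finishes the argument. No delicate estimates are needed.
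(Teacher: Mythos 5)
Your argument is correct. Note first that the paper offers no proof of this lemma at all: it is quoted from Siles Molina's paper \cite{S}, where the maximal algebra of quotients $Q_m(L)$ of a semiprime Lie algebra is built from derivations defined on essential ideals, and the finite dimensional semisimple case is settled by observing that the only essential ideal of $L=L_1\oplus\cdots\oplus L_n$ is $L$ itself and that $\mathrm{Der}(L)=\ad L\cong L$ in characteristic $0$. Your route is slightly different in emphasis: instead of computing the maximal object from its construction, you prove the stronger statement that \emph{every} algebra of quotients $Q\supseteq L$ equals $L$, using the same two structural inputs (every ideal of $L$ is a partial sum of the simple components, so the unique ideal with zero centralizer is $L$; and all derivations of $L$ are inner), and then you invoke only the fact that the maximal algebra of quotients is itself an algebra of quotients. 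That last fact is external to this paper but is exactly how the maximal object is defined and constructed in \cite{S} and \cite{GGN}, so the reduction is legitimate. What your version buys is independence from the particulars of the construction of the maximal quotient algebra (direct limits of partial derivations), and it makes transparent that algebraic closedness is never used; what the argument of \cite{S} buys is a direct identification $Q_m(L)=\mathrm{Der}(L)$, which is of independent interest. The individual steps you sketch (ideals of a semisimple algebra are partial sums, $C_L\bigl(\bigoplus_{i\in A}L_i\bigr)=\bigoplus_{i\notin A}L_i$, injectivity of $q\mapsto(\ad q)|_L$ from $C_Q(L)=0$, and the surjectivity forced by $\mathrm{Der}(L)=\ad_L(L)$) are all sound.
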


\begin{prop}
Let $T$ be a finite dimensional semisimple LTS over an algebraically closed field of characteristic 0 and $S$ be the maximal system of quotients of $T$. Then $S=T$.
\end{prop}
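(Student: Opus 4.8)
The plan is to reduce the claim to its Lie-algebra counterpart, Lemma~\ref{semisimple Lie alg}, by passing to the standard embedding. Recall from the construction preceding Lemma~\ref{semisimple Lie alg} that the maximal system of quotients of $T$ is $S=Q_1$, where $Q=Q_0\oplus Q_1$ is the maximal Lie algebra of quotients of $L(T)$ with respect to the power filter $L(\mF)$, and $\mF$ is the power filter on $T$ of all ideals with zero centralizer. Since the $\Z_2$-grading of $L(T)$ is preserved under this construction, it suffices to prove $Q=L(T)$; reading off the degree-$1$ components then gives $S=Q_1=L(T)_1=T$.

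First I would transfer the hypotheses to $L(T)$. Because $\dim\LL(T,T)\le(\dim T)^2$, the standard embedding $L(T)=T\oplus\LL(T,T)$ is finite dimensional, and since $T$ is semisimple, $L(T)$ is a semisimple Lie algebra (cf.~\cite{L}); in particular $L(T)$ is nondegenerate, hence semiprime, which is also what makes $T$ nondegenerate by Proposition~\ref{T is nondege. iff L(T) is nondege.} and so legitimizes the construction of its maximal system of quotients. Next, as $L(T)$ is semiprime, an ideal of $L(T)$ has zero centralizer exactly when it is essential (the Lie-algebra analogue of Proposition~\ref{center and essential}); combined with the fact, already recorded in the paper, that $L(\mF)$ is the power filter of ideals with zero centralizer on $L(T)$, this shows that $L(\mF)$ is precisely the filter of essential ideals of $L(T)$. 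Hence $Q$ is the maximal Lie algebra of quotients of the semiprime Lie algebra $L(T)$ in the sense of Lemma~\ref{semisimple Lie alg}, and since $L(T)$ is finite dimensional and semisimple over an algebraically closed field of characteristic $0$, that lemma yields $Q=L(T)$. Therefore $S=Q_1=T$.

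The only step I expect to require care is the identification of the two maximal objects: that the algebra $Q$ built from $L(T)$ relative to $L(\mF)$ as in~\cite{GGN} agrees with the maximal Lie algebra of quotients of $L(T)$ appearing in Lemma~\ref{semisimple Lie alg} from~\cite{S}. This reduces to $L(T)$ being semiprime, so that for its ideals ``zero centralizer'' and ``essential'' coincide, together with the description of $L(\mF)$ noted above; once the filters match, the two constructions of the maximal object agree. Everything else is routine translation through the standard embedding, the only external ingredient being the correspondence between semisimple Lie triple systems and semisimple Lie algebras, and no computation is needed.
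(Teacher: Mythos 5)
Your proof is correct and takes essentially the same route as the paper: pass to the standard embedding, observe that $L(T)$ is finite dimensional and semisimple (the paper does this via Lister's radical formula $R(L(T))=R(T)\oplus\LL(R(T),T)=0$), apply Lemma~\ref{semisimple Lie alg} to get $Q=L(T)$, and read off $S=Q_1=T$. Your extra care in matching the maximal Martindale-like quotient of $L(T)$ relative to $L(\mF)$ with the maximal algebra of quotients appearing in Lemma~\ref{semisimple Lie alg} is a point the paper's proof leaves implicit, not a different argument.
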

\begin{proof}
Since $T$ is semisimple, its solvable radical $R(T)=0$, then $R(L(T))=R(T)\oplus\LL(R(T),T)=0$(cf. \cite{L}), which implies that $L(T)$ is semisimple. Hence the maximal
 Lie algebra of quotients of $L(T)$ is itself by Lemma \ref{semisimple Lie alg}, then the maximal system of quotients of $T$ is $T$.
\end{proof}

 {\bf ACKNOWLEDGMENTS}\quad The authors would like to thank the referee for valuable comments and
suggestions on this article.

  The work was supported by  NNSF of China (No. 11171055, No. 11226054), NSF of Jilin Province (No. 201115006), Scientific Research Foundation for
Returned Scholars Ministry of Education of China and the Fundamental
Research Funds for the Central Universities(No. 12SSXT139).

\end{document}